\documentclass[12pt]{amsart}
%%% Packages
\usepackage{amsmath,amsthm,amsfonts,amssymb,latexsym,verbatim,bbm,longtable}
\input xy
\xyoption{all}
\usepackage{hyperref,multirow}
\usepackage[shortlabels]{enumitem}
\usepackage{young}
\usepackage{array}
\usepackage{color}
\usepackage{tikz}
\usetikzlibrary{matrix,arrows}

\allowdisplaybreaks[2] \textwidth15.1cm \textheight22cm \headheight12pt \oddsidemargin.4cm
\evensidemargin.4cm \topmargin0cm \makeatletter
%\def\mhline{\noalign{\ifnum0=`}\fi\hrule height 4\arrayrulewidth \futurelet
%   \@tempa\oxhline}
%\def\oxhline{\ifx\@tempa\hline\vskip \doublerulesep\fi
%      \ifnum0=`{\fi}}
%\def\mvline{\vrule width 4\arrayrulewidth}
%\makeatother

%%% Two column landscape
%\documentclass[10,twocolumn,landscape]{article}
% Also need this package to make pdflatex work correctly with landscape
%\usepackage[landscape,pdftex]{geometry}

% theorem style
\theoremstyle{plain}
\newtheorem{theorem}{Theorem}[section]

\newtheorem{lemma}[theorem]{Lemma}

\newtheorem{prop}[theorem]{Proposition}

\newtheorem{corollary}[theorem]{Corollary}

% misc
\newcommand{\iso}{\cong}
\newcommand{\arr}{\rightarrow}

\newcommand{\incl}{\hookrightarrow}

\DeclareMathOperator{\supp}{Supp}

% letters

\newcommand{\C}{\mathbb{C}}
\newcommand{\Z}{\mathbb{Z}}

\newcommand{\mcB}{\mathcal{B}}

\newcommand{\mcG}{\mathcal{G}}
\newcommand{\mcX}{\mathcal{X}}

\newcommand{\mcP}{\mathcal{P}}

\newcommand{\mfg}{\mathfrak{g}}
\newcommand{\mfu}{\mathfrak{u}}
\newcommand{\mfp}{\mathfrak{p}}
\newcommand{\mfh}{\mathfrak{h}}

\DeclareMathOperator{\Lie}{Lie}
\DeclareMathOperator{\SO}{SO}
\DeclareMathOperator{\Sp}{Sp}

\begin{document}

\title{The cotangent bundle of a cominuscule Grassmannian}

\author{V. Lakshmibai}
\email{lakshmibai@neu.edu}

\author{Vijay Ravikumar}
\email{vijayr@cmi.ac.in}

\author{William Slofstra}
\email{wslofstra@math.ucdavis.edu}

\begin{abstract}
    A theorem of the first author states that the cotangent bundle of the type
    $A$ Grassmannian variety can be embedded as an open subset of a smooth
    Schubert variety in a two-step affine partial flag variety. We extend this
    result to cotangent bundles of cominuscule generalized Grassmannians of
    arbitrary Lie type.
\end{abstract}

\maketitle

\section{Introduction}

Earlier work of Lusztig and Strickland suggests possible connections between
the conormal varieties to partial flag varieties on the one hand, and affine
Schubert varieties on the other.  In particular, Lusztig relates certain orbit
closures arising from the type $A$ cyclic quiver $\widehat{A}_h$ to affine
Schubert varieties \cite{Lu90}. In the case $h=2$, Strickland relates such
orbit closures to conormal varieties of determinantal varieties \cite{St82};
furthermore, any determinantal variety can be canonically realized as an open
subset of a Schubert variety in the Grassmannian \cite{LS78}.

Inspired by these results, the first author was interested in finding a
relationship between affine Schubert varieties and conormal varieties to the
Grassmannian.  As a first step, she showed that the compactification of the
cotangent bundle to the Grassmannian is canonically isomorphic to a Schubert
variety in a two-step affine partial flag variety \cite{La14}. In this paper we
extend her result to cominuscule generalized Grassmannians of arbitrary finite
type (such Grassmannians occur in types $A-E$).

\subsection{Preliminaries}
Let $G_0$ be a simple algebraic group over $\C$ with associated Lie algebra
$\mfg_0$  and simple roots $\{\alpha_1, \ldots, \alpha_n\}$.  A simple root
$\alpha_i$ is \emph{cominuscule} if the coefficient of $\alpha_i$ in any
positive root of $\mfg_0$ (written in the simple root basis) is less than or
equal to $1$.  

The Weyl group of $G_0$ is generated by simple reflections $S_0 := \{s_1,
\ldots, s_n\}$ corresponding to the the simple roots $\{\alpha_1, \ldots,
\alpha_n\}$.  For any subset $K \subset S_0$, we let $P_K \subset G_0$ denote
the parabolic subgroup whose Weyl group is generated by the elements of $K$.
For $1 \leq i \leq n$, set $S_{0,i} := S_0 \setminus \{s_i\}$, so that
$P_{S_{0,i}}$ is a maximal parabolic subgroup of $G_0$.  The manifold $G_0 /
P_{S_{0,i}}$ is called a \emph{generalized Grassmannian of type $G_0$}, and is
said to be \emph{cominuscule} if $\alpha_i$ is cominuscule. For the remainder
of the paper, we fix $m \in [1,n]$ and consider the generalized Grassmannian $X
:= G_0 / P_{S_{0,m}}$ associated to $\alpha_m$. Note that $\alpha_m$ may or
may not be cominuscule at this point.

Let $\mfg$ denote the affine untwisted Kac-Moody algebra associated to
$\mfg_0$, and let $\mcG$ be the corresponding affine Kac-Moody group (see
\cite[\S 6]{Ku02}).\footnote{We use calligraphic font (e.g. $\mcG$ and
$\mcP_J$) for infinite-dimensional Kac-Moody groups, and non-calligraphic
font for finite-dimensional Lie groups (e.g. $G_0$, $P_J$).}
The Dynkin diagram for $\mfg$ depends on the Dynkin diagram for $\mfg_0$, and
is shown in Table \ref{TBL:comin} (see \cite[\S 18.1]{Ca05} or \cite[\S 4.8]{Ka90}).  We use the
convention that the affine node (sometimes called the special node) is labelled by zero, 
and similarly let $\alpha_0$ and $s_0$ be the affine simple root and reflection respectively.
The Weyl group $W$ of $\mfg$ is generated by $S := \{s_0, \ldots, s_n\}$, and
there is a parahoric subgroup $\mcP_K \subset \mcG$ associated to any subset $K
\subset S$. We let $\mcX_K := \mcG / \mcP_K$ denote the associated affine flag
variety, and $W_K \subset W$ denote the Weyl group of $\mcP_K$, or in other
words the subgroup of $W$ generated by $K$.  For any subsets $I \subset K
\subset S$, let $W^I_K \subset W_K$ denote the set of minimal length coset
representatives of $W_K / W_I$.  In particular, $W^K := W^K_S$ is the set of
minimal length coset representatives of $W / W_K$, and elements $w \in W_K$
index Schubert varieties $\mcX_K(w)$ of $\mcX_K$.  

Observe that $S_0 = S \setminus \{s_0\}$.  Let $S_m := S \setminus \{s_m\}$ and $J
:= S_{0,m} = S \setminus \{s_0,s_m\}$.  Let $w_i$ be the maximal element of
$W^J_{S_i}$, where $i \in \{0,m\}$. It is a standard fact
that $\mcX_{S_m}(w_0) \iso \mcX_J(w_0) \iso X$ (see Lemma \ref{L:isomorphic_copies_of_X}).
The basis of this note is the following
elementary but crucial observation:
\begin{lemma}\label{L:comin}
    If $\alpha_m$ is cominuscule in $\mfg_0$ then $\mcX_J(w_0)$ and $\mcX_J(w_m)$
    are isomorphic.
\end{lemma}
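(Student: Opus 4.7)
The plan is to produce a Dynkin diagram automorphism $\sigma$ of the affine Dynkin diagram of $\mfg$ that interchanges $s_0$ and $s_m$ and fixes $J$ setwise, lift it to an automorphism $\Sigma$ of the Kac-Moody group $\mcG$, and show that the induced automorphism of $\mcX_J$ carries $\mcX_J(w_0)$ to $\mcX_J(w_m)$.

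The cominuscule hypothesis is what produces $\sigma$. Any diagram automorphism must preserve the affine marks $a_i$, and since $a_0 = 1$, a $\sigma$ sending $s_0$ to $s_m$ can exist only when $a_m = 1$, i.e., only when $\alpha_m$ is cominuscule. Conversely, when $\alpha_m$ is cominuscule, one verifies case by case from Table \ref{TBL:comin} that such an involution exists: a reflection of the cycle in type $\widehat{A}_n$; a prong swap in types $\widehat{B}_n$ and $\widehat{D}_n$ (when $\alpha_m = \alpha_1$); a chain reversal, possibly composed with a prong swap, in types $\widehat{C}_n$ and $\widehat{D}_n$ for $\alpha_m \in \{\alpha_{n-1},\alpha_n\}$; and the evident diagram involution in types $\widehat{E}_6$ and $\widehat{E}_7$. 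Because $\sigma$ is a bijection of $S$ sending $s_0$ to $s_m$, the condition $\sigma(J) = J$ automatically forces $\sigma(s_m) = s_0$, so $\sigma$ really is an involution on $\{s_0, s_m\}$.

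Once $\sigma$ is in hand, it lifts in the standard way to an automorphism $\Sigma$ of $\mcG$ permuting the real root subgroups and simple reflections according to $\sigma$ (see \cite{Ku02}). Since $\sigma(J) = J$, $\Sigma$ preserves the parahoric $\mcP_J$ and therefore descends to an automorphism $\bar\Sigma$ of $\mcX_J$. The induced action on $W$ sends $s_i$ to $\sigma(s_i)$, so it exchanges $W_{S_0}$ and $W_{S_m}$ while fixing $W_J$ setwise, and hence restricts to a length-preserving bijection $W^J_{S_0} \to W^J_{S_m}$. In particular it sends the maximal element $w_0$ to the maximal element $w_m$, so $\bar\Sigma$ maps $\mcX_J(w_0)$ isomorphically onto $\mcX_J(w_m)$. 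The main obstacle is the existence of $\sigma$: although the marks argument shows that the cominuscule hypothesis is necessary, sufficiency seems to require case-checking, since the natural construction via the $P^\vee/Q^\vee$-action on the affine diagram produces only a rotation, which is not typically an involution when $m \neq 0$.
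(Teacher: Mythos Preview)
Your proof is correct and rests on the same case-by-case inspection of Table~\ref{TBL:comin} that the paper uses. The difference is one of packaging: the paper simply observes that the subdiagrams on $S_0$ and $S_m$ are isomorphic with $\alpha_m$ matching $\alpha_0$, and concludes (implicitly via Lemma~\ref{L:isomorphic_copies_of_X}) that the two Schubert varieties, being flag varieties for isomorphic Dynkin data, are abstractly isomorphic. You instead produce a diagram automorphism $\sigma$ of the \emph{full} affine diagram with $\sigma(s_0)=s_m$ and $\sigma(J)=J$, lift it to $\mcG$, and obtain the isomorphism as the restriction of an automorphism of the ambient $\mcX_J$. This is a strictly stronger statement than what the paper proves, and it buys you something: your isomorphism is canonical in $\mcX_J$ and visibly carries Schubert varieties to Schubert varieties. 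The paper's argument is more economical, since the abstract isomorphism of flag varieties is all that is needed downstream, and it avoids having to check that the subdiagram isomorphism extends to the whole affine diagram (which, as you note, it always does here but is not automatic in general).
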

\begin{proof}
    The list of cominuscule simple roots in each type is well known. We
    indicate the cominuscule simple roots for each Dynkin diagram (up to
    diagram automorphism) in the left column of Table \ref{TBL:comin}, and
    the corresponding untwisted affine Dynkin diagram in the right column.
    In each case the Dynkin diagram of $W_{S_0}$ is isomorphic to the Dynkin
    diagram of $W_{S_m}$, and this isomorphism identifies $\alpha_m$ with the
    affine root $\alpha_0$.  Consequently $\mcX_J(w_0)$ and $\mcX_J(w_m)$ are
    isomorphic.
\end{proof}

\subsection{Results for cominuscule varieties}
Consider the Schubert variety $Y := \mcX_J(w_0 w_m)$ in $\mcX_J$. The Kac-Moody group
$\mcG$ acts on $\mcX_J$ by left multiplication, and since $G_0$ is the Levi
subgroup of $\mcP_{S_0} \subset \mcG$, we can regard $Y$ as a $G_0$-variety.

In fact $Y$ can naturally be considered as a  $G_0$-homogeneous fibre bundle over $X$.  More precisely:
\begin{theorem}\label{T:main_a}\  
    The affine Schubert variety $Y = \mcX_J(w_0 w_m)$ is stable under the left
    action of $G_0 \subset \mcG$, and the natural projection $Y \to
    \mcX_{S_0}(w_0) \cong X$ is a $G_0$-homogeneous fibre bundle map with fibre
    $\mcX_J(w_m)$. In particular $Y$ is smooth. 
\end{theorem}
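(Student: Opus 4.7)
My strategy is to use the natural projection $\pi : \mcX_J \to \mcX_{S_m}$ (induced by $\mcP_J \subset \mcP_{S_m}$) to exhibit $Y$ as a $G_0$-homogeneous bundle over $\mcX_{S_m}(w_0) \cong X$ with fibre $\mcX_J(w_m)$. (I note that the projection stated in the theorem is most naturally to $\mcX_{S_m}$ rather than $\mcX_{S_0}$, since $w_0 \in W_{S_0}$ gives a trivial image in the latter.)

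The combinatorial heart of the argument is the length-additive factorization $\ell(w_0 w_m) = \ell(w_0) + \ell(w_m)$, together with the claim that $W_{S_0}\, w_m \cap W^J = \{v \cdot w_m : v \in W^J_{S_0}\}$ with lengths adding. The cominuscule hypothesis enters here via the Dynkin diagram symmetry of Lemma~\ref{L:comin}: reduced expressions for $w_0$ use only letters of $S_0$ and for $w_m$ only letters of $S_m$, and under this symmetry their concatenation should remain reduced modulo $W_J$. I anticipate this combinatorial step to be the main obstacle; once it is in hand the rest of the proof is largely formal.

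Granted the factorization, the $G_0$-stability of $Y$ follows from the observation that $w_0 \in W_{S_0}$ implies $W_{S_0}\cdot w_0 w_m \cap W^J = W_{S_0}\, w_m \cap W^J$, whose maximal element is precisely $w_0 \cdot w_m$ (attained at the maximum $v = w_0$ of $W^J_{S_0}$); hence $\mcX_J(w_0 w_m)$ is $\mcP_{S_0}$-stable, and a fortiori $G_0$-stable. Applying $\pi$ and using $w_m \in W_{S_m}$ to simplify $\mcB w_0 w_m \mcP_{S_m} = \mcB w_0 \mcP_{S_m}$, the image $\pi(Y)$ is $\mcX_{S_m}(w_0) \cong X$ (the last isomorphism from the standard fact cited just before Lemma~\ref{L:comin}).

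To compute the fibre over the base point $\bar e$, I intersect $Y$ with $\mcP_{S_m}/\mcP_J$ and decompose into $\mcB$-orbits indexed by elements $u \in W^J \cap W_{S_m} = W^J_{S_m}$ satisfying $u \leq w_0 w_m$; since $w_m$ is the longest element of $W^J_{S_m}$, any such $u$ lies below $w_m$ in the Bruhat order, so the fibre equals $\mcX_J(w_m)$. Because $G_0$ acts transitively on $\mcX_{S_m}(w_0) \cong X$ and $\pi|_Y$ is $G_0$-equivariant, $Y \cong G_0 \times_P \mcX_J(w_m)$ where $P = G_0 \cap \mcP_{S_m}$ is the stabilizer of $\bar e$. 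Smoothness follows at once since the base $X$ and fibre $\mcX_J(w_m) \cong X$ are both smooth (Lemma~\ref{L:comin}).
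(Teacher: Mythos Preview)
Your approach is sound and will succeed, but it differs from the paper's in one important respect, and your proposal contains a misconception worth correcting.

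The paper proves this theorem \emph{without} the cominuscule hypothesis. The length additivity $\ell(w_0 w_m)=\ell(w_0)+\ell(w_m)$ that you flag as the ``main obstacle'' is in fact immediate: since $w_0\in W_{S_0}^J$, every right descent of $w_0$ lies in $S_0\setminus J=\{s_m\}$, but $s_m\notin S_m$; hence $w_0\in W^{S_m}$, and $w_0 w_m$ is automatically a parabolic decomposition with respect to $S_m$. No diagram symmetry is needed. Likewise, smoothness of the fibre does not require Lemma~\ref{L:comin}: the paper observes (Lemma~\ref{L:isomorphic_copies_of_X}) that $\mcX_J(w_m)\cong G_{S_m}/P_{S_m,J}$ is a full partial flag variety of the finite-dimensional Levi $G_{S_m}$, hence smooth regardless of whether $\alpha_m$ is cominuscule. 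Your invocation of Lemma~\ref{L:comin} for smoothness is therefore unnecessary (and would make the theorem weaker than it is).

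As for method: the paper packages the combinatorics into Lemma~\ref{L:fibre}, showing (a) that $w_0 w_m$ is a \emph{Billey--Postnikov decomposition} (i.e.\ $\supp(w_0)\cap S_m\subset D^J(w_m)$, which here reads $J\subset S_m$, a triviality), and (b) that $D^J(w_0 w_m)=S_0$; it then invokes the general fibre-bundle criterion of Proposition~\ref{P:billey_postnikov}. Your route---compute the fibre over the base point directly as $Y\cap(\mcP_{S_m}/\mcP_J)=\mcX_J(w_m)$ and then use transitivity of $G_0$ on the base to conclude $Y\cong G_0\times_{P}\mcX_J(w_m)$---is a perfectly valid and more elementary alternative that avoids citing the Billey--Postnikov machinery. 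One small caution: your set-level claim $W_{S_0}\,w_m\cap W^J=\{v\,w_m:v\in W_{S_0}^J\}$ is stronger than what you need and not obviously true as stated; what you actually require (and what the paper proves in Lemma~\ref{L:fibre}(b)) is only that every $W^J$-representative of $W_{S_0}\,w_0 w_m$ lies below $w_0 w_m$, which follows by writing $z w_0=v_0 z'$ and $z' w_m=v_m z''$ with $v_0\le w_0$, $v_m\le w_m$. Finally, your observation that the projection should land in $\mcX_{S_m}(w_0)$ rather than $\mcX_{S_0}(w_0)$ is correct and matches what the paper actually uses in its proof.
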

Our main result is that if $X$ is cominuscule then $Y$ is a natural
compactification of the cotangent bundle $T^* X$: 
\begin{theorem}\label{T:main_b}
    If $X$ is cominuscule, then the fibre $\mcX_J(w_m)$ is isomorphic to $X$,
    and there is a $G_0$-equivariant map $\tilde{\mu}: T^*X \to Y$ of fibre
    bundles over $X$, under which $T^*X$ is isomorphic to a dense open subset
    of $Y$.
\end{theorem}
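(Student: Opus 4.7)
My plan is to exploit the $G_0$-equivariance of both sides, reducing the construction of $\tilde{\mu}$ to a $P$-equivariant map between fibres over a single base point. The first claim is immediate: Lemma \ref{L:comin} combined with the identification $\mcX_J(w_0) \cong X$ from Lemma \ref{L:isomorphic_copies_of_X} yields $\mcX_J(w_m) \cong \mcX_J(w_0) \cong X$, induced by the affine diagram automorphism swapping $\alpha_0$ and $\alpha_m$.

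For the map, set $P := P_{S_{0,m}}$, fix a base point $x_0 \in X = G_0/P$, and write $F := Y_{x_0}$ for the fibre of $Y$ at $x_0$. By Theorem \ref{T:main_a}, $Y \to X$ is $G_0$-homogeneous with fibre $F \cong \mcX_J(w_m)$, while $T^*X \to X$ is $G_0$-homogeneous with fibre $T^*_{x_0}X \cong (\mfg_0/\mfp)^*$. Giving a $G_0$-equivariant map of bundles over $X$ is thus equivalent to giving a $P$-equivariant map $\mu \colon (\mfg_0/\mfp)^* \to F$. Using the Killing form I identify $(\mfg_0/\mfp)^*$ with the opposite nilradical $\mfu^-_P \subset \mfg_0$, which is abelian because $\alpha_m$ is cominuscule; hence $\exp$ is a $P$-equivariant isomorphism $\mfu^-_P \xrightarrow{\sim} U^-_P$. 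I define $\mu$ as the composition
\[
\mfu^-_P \xrightarrow{\exp} U^-_P \xrightarrow{u \mapsto u\cdot x_0} X \xrightarrow{\sim} F,
\]
where the last arrow is the isomorphism from the first claim. Globalizing via $G_0$-homogeneity yields the desired $\tilde{\mu}$.

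Assuming $\mu$ is $P$-equivariant, the restriction of $\tilde{\mu}$ to the fibre $T^*_{x_0}X$ is by construction the open Bruhat cell embedding $\mfu^-_P \hookrightarrow X \cong F$, which is a dense open immersion. Propagating this across fibres by $G_0$-equivariance produces a $G_0$-equivariant open immersion $\tilde{\mu} \colon T^*X \hookrightarrow Y$; density of the image globally follows from the dimension count $\dim T^*X = 2\dim X = \dim Y$, using smoothness of $Y$ from Theorem \ref{T:main_a}.

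The main obstacle is verifying $P$-equivariance of $\mu$. The subtlety is that $F$ carries a $P$-action inherited from its embedding in $\mcX_J$, whereas the isomorphism $F \cong X$ is mediated by a diagram automorphism of the affine Dynkin diagram which need not lift to $\mcG$ itself. Since the unipotent radical $U_P$ stabilizes $x_0$ and therefore acts trivially on $F$, only equivariance for the Levi $L := P/U_P$ requires argument; this should reduce to the observation that the sub-diagram $J$ sits identically inside both $S_0$ and $S_m$ under the cominuscule swap of Table \ref{TBL:comin}, so that the $L$-action on $F$ transported through the swap agrees with the standard $L$-action on $X$. If a uniform argument proves elusive, a case-by-case check across the short list of cominuscule types (Table \ref{TBL:comin}) would suffice.
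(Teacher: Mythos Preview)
Your overall strategy---reduce to a $P_0$-equivariant map on fibres, then globalize---matches the paper's. But there are two genuine gaps.

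First, the Killing form identification is backwards: $(\mfg_0/\mfp)^*$ is the annihilator of $\mfp$, which is the nilradical $\mfu_P = \mfu_0$, not the opposite nilradical $\mfu_P^-$. (The Killing form pairs $\mfg_\alpha$ with $\mfg_{-\alpha}$, so $\mfp^\perp = \bigoplus_{-\beta \notin R(\mfp)} \mfg_\beta = \mfu_P$.) This matters: if you correct the fibre to $\mfu_0$, then $\exp(\mfu_0) = U_P \subset P$ fixes $x_0$, and your orbit map to $X$ collapses.

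Second---and you flag this yourself---the $P_0$-equivariance of the last arrow is not established. The diagram automorphism $\sigma$ carries the $P_0$-action on $\mcX_J(w_0)$ to the $\sigma(P_0) = P_m$-action on $\mcX_J(w_m)$, whereas the fibre $F$ carries the $P_0$-action coming from $P_0 \subset \mcG$; these need not agree, since $\sigma$ typically permutes $J$ nontrivially (e.g.\ rotation in type $\tilde A_n$) and so does not fix the common Levi $G_J$ pointwise. Your suggested reduction to $L$-equivariance does not close this gap.

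The paper sidesteps both problems by working concretely in the loop realization $\mfg \cong \mfg_0 \otimes \C[z,z^{-1}] \oplus \C c \oplus \C d$. The key map is
\[
\phi: \mfu_0 \to \mfg,\qquad x \mapsto x \otimes z^{-1},
\]
which lands in $\mfu_m^-$ (the opposite nilradical inside the \emph{other} Levi $\mfg_m$) by a root computation using cominusculeness (Lemma~\ref{L:phi}). Since $P_0 \subset G_0$ sits in the constant loops, its adjoint action commutes with multiplication by $z^{-1}$, so $\phi$ is $P_0$-equivariant for free. Composing with $\exp$ and projection to $\mcX_J$ gives the open dense $P_0$-equivariant embedding $\Phi: \mfu_0 \hookrightarrow \mcX_J(w_m)$ directly---no diagram automorphism needed. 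This is the missing idea: rather than transporting through an abstract isomorphism $X \cong F$, one uses the loop coordinate $z$ to pass from $\mfu_0 \subset \mfg_0$ to $\mfu_m^- \subset \mfg_m$ inside the single ambient $\mfg$, where the $P_0$-action is the same on both sides.
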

We prove Theorem \ref{T:main_a} in Section \ref{S:fibre} and Theorem
\ref{T:main_b} in Section \ref{S:proof}.  In order to prove Theorem
\ref{T:main_b} we explicitly construct the $G_0$-equivariant embedding
$\tilde{\mu}: T^*X \to Y$, which maps the base $X$ isomorphically onto the
Schubert variety $\mcX_J(w_0)$, and maps the fibre over the identity to a dense
open subset of the Schubert variety $\mcX_J(w_m)$.

When $X$ is minuscule rather than cominuscule, it is natural to replace $\mcG$
with a twisted affine Kac-Moody group. Theorem \ref{T:main_a} still holds in
this case, but as we show in Section \ref{S:minuscule}, Theorem
\ref{T:main_b} does not hold.  In this case the variety $Y$ is not the compactification of
the cotangent bundle $T^*X$, but of a different bundle over $X$. 

\subsection{Acknowledgements}
The second author thanks Calin Iuliu Lazaroiu and K. N. Raghavan for useful discussions. 
The Dynkin diagrams in tables \ref{TBL:comin} and \ref{TBL:min} are based on the excellent
TikZ templates due to Oscar Castillo-Felisola.
The first author was supported by NSA grant H98230-11-1-0197.
The second author was partially supported by a fellowship from the Infosys Foundation.

\section{The fibre bundle structure on $Y$}\label{S:fibre}

Given $I \subset K \subset S$, we can write any $w \in W^I$ uniquely as $w =
vu$, where $v \in W^K$ and $u \in W_K^I$. In this case the projection $\mcG /
\mcP_I \arr \mcG / \mcP_K$ induces a projection $\mcX_I(w) \arr \mcX_K(v)$, and the
generic fibre of this projection is $\mcX_I(u)$.  We say $w=vu$ is a
\emph{parabolic decomposition} with respect to $K$.

For any $v \in W$, we define $\supp(v) := \{s \in S \mid s \leq v\}$ to be the
set of simple reflections contained in a reduced expression for $v$.  For any
$u \in W$, let $D^I(u) := \{s \in S \mid  su \leq_I u\}$, where $\leq_I$ is the
Bruhat order on $W / W_I$.  We have the following proposition from
\cite[Theorem 2.3 and Proposition 3.2]{RS14}: 
\begin{prop}\label{P:billey_postnikov}
    The projection $\mcX_I(w) \arr \mcX_K(v)$ is a fibre bundle with fibre
    $\mcX_I(u)$ if and only if $\supp(v) \cap K \subset D^I(u)$.
\end{prop}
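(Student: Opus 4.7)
Since the projection $\pi : \mcX_I(w) \arr \mcX_K(v)$ is $\mcP_K$-equivariant and proper, the plan is to first reduce the fibre bundle condition to a combinatorial statement about Bruhat intervals, and then verify that combinatorial equivalence. The key input is the parabolic decomposition at each $T$-fixed point: every $w' \in W^I$ with $w' \leq w$ factors uniquely as $w' = v'u'$ with $v' \in W^K$, $u' \in W_K^I$, and automatically $v' \leq v$ in $W^K$. Let $u_{v'}$ denote the maximal element of $\{u' \in W_K^I : v'u' \leq_I w\}$; then the fibre of $\pi$ over the $T$-fixed point $v'\mcP_K$ is $v'\cdot\mcX_I(u_{v'})$. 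By $\mcB$-equivariance and the upper semicontinuity of fibre dimension, $\pi$ is a fibre bundle (necessarily with fibre $\mcX_I(u)$) if and only if $u_{v'} = u$ for every $v' \leq v$ in $W^K$.

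For the direction $(\Leftarrow)$, I would argue by induction on $\ell(v)$, using a reduced expression $v = s_{i_1}\cdots s_{i_k}$ and the hypothesis $\supp(v) \cap K \subset D^I(u)$. The inductive step shows that for any subword giving $v' \leq v$, one still has $v'u \leq_I w$: whenever a letter $s_{i_j} \in K$ is dropped, the hypothesis $s_{i_j}u \leq_I u$ lets one absorb the missing factor on the right, and then transport it past the remaining prefix of $v$ using the exchange condition. For $(\Rightarrow)$, I would contrapositively choose $s \in \supp(v) \cap K \setminus D^I(u)$ together with a reduced expression of $v$ containing $s$, and construct a $v' \leq v$ such that $v'(su) \leq_I w$ with $su >_I u$; this directly violates $u_{v'} = u$ and hence the bundle condition.

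The main obstacle is managing the parabolic Bruhat order $\leq_I$ on both sides of the decomposition $w' = v'u'$ simultaneously: ascents and descents behave differently depending on whether a simple reflection lies in $K$, in $I$, or in $S \setminus K$, so the lifting property for $\leq_I$ must be applied carefully to the cosets that actually intervene when a subword of $v$ is multiplied against various $u' \in W_K^I$. One also has to verify that the set $\{u' \in W_K^I : v'u' \leq_I w\}$ really is a Bruhat interval, so that $u_{v'}$ genuinely controls the fibre. Once these subword manipulations are disentangled, the equivalence follows, as carried out in \cite{RS14}.
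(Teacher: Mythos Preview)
The paper does not prove this proposition at all: it is simply quoted, with the attribution ``from \cite[Theorem 2.3 and Proposition 3.2]{RS14}'', and then used as a black box in the proof of Theorem~\ref{T:main_a}. So there is no argument in the paper to compare your sketch against.

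Your outline is broadly in the spirit of the argument in \cite{RS14}: one identifies the fibre over each $T$-fixed point $v'\mcP_K$ with a Schubert variety $\mcX_I(u_{v'})$ indexed by the maximum of $\{u' \in W_K^I : v'u' \leq_I w\}$, and then shows that the Billey--Postnikov condition $\supp(v)\cap K \subset D^I(u)$ is equivalent to $u_{v'}=u$ for all $v'\leq v$. A couple of points in your sketch would need to be made precise before it stands on its own. First, the existence of a unique maximum $u_{v'}$ (equivalently, that the fibre is a single Schubert variety rather than a union) is itself a nontrivial fact about the projection of Bruhat intervals; in \cite{RS14} this is handled separately. Second, your inductive step for $(\Leftarrow)$---absorbing a dropped letter $s_{i_j}\in K$ into $u$ via $s_{i_j}u\leq_I u$ and then ``transporting it past the remaining prefix of $v$ using the exchange condition''---is the delicate part, and as written it is only a gesture: one must track how the resulting word remains $\leq_I w$ after each exchange, which is exactly where the careful coset bookkeeping you flag as the ``main obstacle'' actually enters. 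Since you yourself defer to \cite{RS14} for these details, your proposal is really a citation with commentary, which matches what the paper does.
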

When the condition $\supp(v) \cap K \subset D^I(u)$ is satisfied, we say that $w
= vu$ is a \emph{Billey-Postnikov decomposition} with respect to $K$.

Recall that for any $s \in S$, we have $sw \leq_I w$ if and only if $\mcX_I(w)$ is
stable under left multiplication by the rank $1$ parahoric subgroup
$\mcP_{\{s\}}$.  It follows that if $L =D^I(w)$, then $\mcX_I(w)$ is stable under
the action of the parahoric subgroup $\mcP_L$ (\cite{BL00}, see also
\cite[Lemma 3.9]{RS14}).

\begin{lemma}\label{L:fibre}
    Let $y = w_0 w_m$, so $Y = \mcX_J(y)$. 
    \begin{enumerate}[(a)]
        \item $y = w_0 w_m$ is a Billey-Postnikov decomposition with respect to $S_m$.
        \item $D^J(y) = S_0$. 
    \end{enumerate}
\end{lemma}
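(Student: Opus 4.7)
The plan is to verify (a) by checking the parabolic decomposition and the Billey-Postnikov condition of Proposition \ref{P:billey_postnikov} directly, and then deduce (b) using the fibre-bundle projection from (a) together with a case analysis of left multiplication by $s \in S_0$.

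For (a), since $w_0$ is the maximal element of $W^J_{S_0} \subset W_{S_0} \cap W^J$, its right descent set lies in $S_0 \cap (S \setminus J) = \{s_m\}$, which is disjoint from $S_m$. Hence $w_0 \in W^{S_m}$, and combined with $w_m \in W_{S_m}$ this gives $\ell(w_0 w_m) = \ell(w_0) + \ell(w_m)$. A short length check (for $t \in J$, $w_m t > w_m$ extends via the reduced factorization to $yt > y$) confirms $y \in W^J$, so $y = w_0 w_m$ is a parabolic decomposition with respect to $S_m$. For the BP condition $\supp(w_0) \cap S_m \subset D^J(w_m)$: since $w_0 \in W_{S_0}$ the left side lies in $S_0 \cap S_m = J$, while the maximality of $w_m$ in $W^J_{S_m}$ makes $\mcX_J(w_m)$ a finite union of Bruhat cells equal to $\mcP_{S_m}/\mcP_J$, which is $\mcP_{S_m}$-stable, so $D^J(w_m) \supset S_m \supset J$.

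For (b), I prove the two inclusions separately. For $D^J(y) \subset S_0$: the fibre-bundle projection $\mcX_J(y) \to \mcX_{S_m}(w_0)$ supplied by (a) and Proposition \ref{P:billey_postnikov} is $\mcG$-equivariant, so $D^J(y) \subset D^{S_m}(w_0)$. An argument parallel to (a) identifies $\mcX_{S_m}(w_0)$ with the image of $\mcX_J(w_0) = \mcP_{S_0}/\mcP_J$ under $\mcG/\mcP_J \to \mcG/\mcP_{S_m}$, namely $\mcP_{S_0}/(\mcP_{S_0} \cap \mcP_{S_m}) = \mcP_{S_0}/\mcP_J$, a finite-dimensional $\mcP_{S_0}$-stable variety; hence $D^{S_m}(w_0) = S_0$, the only other option $D^{S_m}(w_0) = S$ being ruled out on dimensional grounds.

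For the reverse $D^J(y) \supset S_0$, fix $s \in S_0$. The same maximality argument applied to $w_0$ gives $D^J(w_0) = S_0$, so $sw_0 \leq_J w_0$. If $sw_0 < w_0$ in Bruhat order, then $sw_0 \in W^J \cap W_{S_0}$ still has right descents in $\{s_m\}$, so $sw_0 \in W^{S_m}$ and the reduced factorization $sy = sw_0 \cdot w_m$ yields $\ell(sy) = \ell(y) - 1$, giving $s \in D^J(y)$. Otherwise $sw_0 > w_0$ together with $sw_0 \leq_J w_0$ forces $sw_0 \notin W^J$; the lifting lemma then provides $t \in J$ with $sw_0 = w_0 t$. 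Writing $tw_m = u' \tau$ with $u' \in W^J_{S_m}$, $u' \leq w_m$, $\tau \in W_J$ (using $t \in J \subset D^J(w_m)$ from (a)), we compute $syW_J = w_0 u' W_J$ with minimal representative $w_0 u' \in W^J$ (the product being reduced because $w_0 \in W^{S_m}$), and $w_0 u' \leq w_0 w_m = y$. Hence $sy \leq_J y$. The main obstacle in the plan is this last subcase, whose analysis relies on the lifting lemma and the BP structure secured in (a).
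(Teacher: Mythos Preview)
Your proof is correct and follows essentially the same approach as the paper. For part (a) the arguments are identical (the paper just asserts the parabolic decomposition is ``clear'' where you spell out the details). For part (b), the paper avoids your case split by writing, for any $z\in W_{S_0}$, the two successive parabolic decompositions $zw_0=v_0z'$ (with $v_0\in W^J_{S_0}$, $z'\in W_J$) and $z'w_m=v_mz''$ (with $v_m\in W^J_{S_m}$, $z''\in W_J$), so that $zy=v_0v_mz''$ with $v_0\le w_0$, $v_m\le w_m$, and hence $v_0v_m\le w_0w_m$ by the subword criterion; and it gets the reverse inclusion simply by noting $D^J(y)$ must be proper in $S$. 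Your case analysis using Deodhar's lifting lemma and the projection to $\mcX_{S_m}(w_0)$ reaches the same conclusion by the same underlying mechanism (maximality of $w_0$ and $w_m$), just unpacked more explicitly.
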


\begin{proof}
    Since $w_i$ is maximal in $W_{S_i}^J$, we know that $D^J(w_i) = S_i$, for
    $i \in \{0,m\}$. It is clear that $w_0 w_m$ is a parabolic decomposition
    with respect to $S_m$, and $\supp(w_0) \cap S_m = S_0 \cap S_m = J \subset
    D^J(w_m)$, proving part (a).

    For part (b), if $z \in W_{S_0}$ then $z w_0 \leq_J w_0$, and hence $z w_0
    = v_0 z'$, where $v_0 \in W^J_{S_0}$ and $z' \in W_J$. Similarly $z' w_m =
    v_m z''$, where $v_m \in W^J_{S_m}$ and $z'' \in W_J$. So $z y = v_0 v_m
    z'' \leq_J y$, and hence $S_0 \subset D^J(y)$. But $D^J(y)$ must be a
    proper subset of $S$, so $D^J(y) = S_0$. 
\end{proof}
Given $K \subseteq S$, the Levi subgroup $G_K$ of $\mcP_K$ is a Kac-Moody group
with Weyl group $W_K$. Since $\mcG$ is affine, if $K$ is a strict subset of $S$
then $G_K$ is finite-dimensional, and similarly $W_K$ is finite. In order to
prove Theorem 1.2 we need the following standard lemma:
\begin{lemma}\label{L:isomorphic_copies_of_X}
    If $K,I \subsetneq S$ and $w \in W_K^{K \cap I}$, then $P_{K,I} := G_K \cap
    \mcP_I$ is the parabolic subgroup of $G_K$ corresponding to the subgroup
    $W_{K \cap I} \subseteq W_K$, and $\mcX_I(w)$ is isomorphic to a Schubert
    variety in the flag variety $G_K / P_{K,I}$. 
    In particular, if $w$ is the maximal element of $W_K^{K \cap I}$ then
    $\mcX_I(w)$ is isomorphic to $G_K / P_{K,I}$. 
\end{lemma}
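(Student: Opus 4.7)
The plan is to exhibit a closed embedding $\phi : G_K/P_{K,I} \hookrightarrow \mcX_I$ coming from the inclusion $G_K \subset \mcP_K \subset \mcG$, and then match up Schubert varieties on the two sides via a dimension count.

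First I would identify $P_{K,I}$. The standard intersection formula for parahorics of the affine Kac-Moody group $\mcG$ gives $\mcP_K \cap \mcP_I = \mcP_{K \cap I}$, so $G_K \cap \mcP_I = G_K \cap \mcP_{K \cap I}$. Since $\mcP_{K \cap I}$ is a standard parahoric containing the Iwahori $\mcB$, its intersection with the Levi $G_K$ is the standard parabolic subgroup of $G_K$ generated by the Borel $B_K := G_K \cap \mcB$ together with representatives of the simple reflections in $K \cap I$; this is precisely the parabolic of $G_K$ with Weyl group $W_{K \cap I}$. The inclusion then descends to an injection $\phi : G_K/P_{K,I} \to \mcX_I$. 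Because $K \subsetneq S$, the group $G_K$ is finite-dimensional and $G_K/P_{K,I}$ is a projective partial flag variety, so $\phi$ is automatically a closed immersion.

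Next I would match Schubert cells. A short length argument shows $W_K^{K \cap I} \subseteq W^I$: for $w \in W_K^{K \cap I}$ and $s \in K \cap I$ we have $\ell(ws) > \ell(w)$ by minimality, while for $s \in I \setminus K$ any right descent $s$ of $w$ would have to appear in some reduced expression for $w$ and thus lie in $K$, a contradiction, so again $\ell(ws) > \ell(w)$. Hence every $w \in W_K^{K \cap I}$ is a $T$-fixed point of both flag varieties, and since $B_K \subset \mcB$ and $P_{K,I} \subset \mcP_I$, the map $\phi$ sends the Schubert cell $B_K w P_{K,I}/P_{K,I}$ of dimension $\ell(w)$ into the Schubert cell $\mcB w \mcP_I/\mcP_I$, also of dimension $\ell(w)$. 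Passing to closures, $\phi$ sends the Schubert variety $X_K(w) := \overline{B_K w P_{K,I}/P_{K,I}}$ into $\mcX_I(w)$; but both are irreducible closed subvarieties of dimension $\ell(w)$ and $\phi$ is a closed immersion, so $\phi(X_K(w)) = \mcX_I(w)$. This gives the desired isomorphism $X_K(w) \iso \mcX_I(w)$, and in the special case where $w$ is maximal in $W_K^{K \cap I}$, $X_K(w) = G_K/P_{K,I}$.

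The main obstacle is the first step: the parahoric intersection formula and the Levi-compatibility of standard parabolics in the affine Kac-Moody setting. Both are standard and I would simply cite the relevant sections of \cite{Ku02}.
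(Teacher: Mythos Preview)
The paper does not actually prove this lemma; it is introduced as ``the following standard lemma'' and stated without proof. So there is no authorial argument to compare against, and your proposal should be judged on its own merits.

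Your argument is essentially correct and follows the natural line: identify $P_{K,I}$ via $\mcP_K \cap \mcP_I = \mcP_{K\cap I}$, pass to the quotient to get a map $G_K/P_{K,I}\to \mcX_I$, check $W_K^{K\cap I}\subseteq W^I$, and match Schubert cells by dimension. The inclusion $W_K^{K\cap I}\subseteq W^I$ is argued cleanly, and the cell-by-cell dimension count is the right way to finish.

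One point deserves more care. You assert that since $G_K/P_{K,I}$ is projective, the induced map $\phi$ is ``automatically a closed immersion.'' Projectivity of the source gives properness, hence a closed image, but not that $\phi$ is an immersion: an injective proper morphism need not be a closed immersion without further input (e.g.\ injectivity on tangent spaces, or unramifiedness). In fact you do not need this global statement at all. Your own cell argument already does the work: for $w\in W_K^{K\cap I}$ the Schubert cell on either side is the orbit of $[w]$ under the unipotent radical generated by the root subgroups for $\{\alpha\in R^+ : w^{-1}\alpha<0\}$, and these root subgroups lie in $G_K$ because $w\in W_K$. Thus $\phi$ restricts to an \emph{isomorphism} on each cell, not merely an injection between equal-dimensional affine spaces, and taking closures gives $\phi(X_K(w))=\mcX_I(w)$ directly. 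I would replace the ``automatically a closed immersion'' sentence with this observation, and then the argument is complete.
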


Now the proof of Theorem \ref{T:main_a} follows immediately from 
Lemma \ref{L:fibre}:
\begin{proof}[Proof of Theorem \ref{T:main_a}]
    By part (b) of Lemma \ref{L:fibre}, the variety $Y=\mcX_J(w_0w_m)$ is
    stable under the left action of $G_0$.  The base $\mcX_{S_m}(w_0)$ is
    clearly $G_0$-stable as well, and the natural projection $Y \to
    \mcX_{S_m}(w_0)$ is $G_0$-equivariant.  By part (a) of Lemma \ref{L:fibre}
    and Proposition \ref{P:billey_postnikov}, the projection $Y \to
    \mcX_{S_m}(w_0)$ is a $G_0$-homogeneous fibre bundle with fibre
    $\mcX_J(w_m)$. 

    Now the Levi subgroup $G_{S_0}$ of $\mcP_{S_0}$ is simply $G_0$.
    Since $S_m \cap S_0 = J$ and $w_0$ is the maximal element of
    $W_{S_0}^J$, we can alternately set $I = S_m$ and $J$ in Lemma
    \ref{L:isomorphic_copies_of_X} to get $\mcX_{S_m}(w_0) \iso \mcX_{J}(w_0)
    \iso X = G_0 / P_J$. Similarly $\mcX_J(w_m)$ is isomorphic to the flag
    variety $G_{S_m} / P_{S_m, J}$. Since $Y$ is a fibre bundle with smooth
    fibre and base, it follows that $Y$ is smooth.
\end{proof}

\section{The cotangent bundle}\label{S:proof}
If $X$ is cominuscule then $\mcX_J(w_m)$ is isomorphic to $X$ by Lemma
\ref{L:comin}. To prove Theorem \ref{T:main_b}, we explicitly construct the map
$T^* X \incl Y$.  Let $\mcB$ be the Borel subgroup of the Kac-Moody group
$\mcG$ (in the literature $\mcB$ is also known as the Iwahori subgroup of
$\mcG$). For convenience, we write $G_i$ for the Levi subgroup $G_{S_i}$ of the
parahoric subgroup $\mcP_{S_i} \subset \mcG$, where $i \in \{0,m\}$ (in
particular $G_0$ is the same as before).  We let $B_i := G_i \cap \mcB$ be the
induced Borel of $G_i$, and $P_i := B_i W_J B_i = G_i \cap \mcP_{J}$.  Finally,
let $U_i \subset P_i$ be the unipotent radical of $P_i$. As in the previous
section, $P_0 = P_J$, $X = G_0 / P_0$, and moreover $\mcX_J(w_i) \iso G_i /
P_i$ for $i \in \{0,m\}$.

We will also need to use the underlying Lie algebras.  We assume the standard
construction of $\mfg$, in which 
\begin{equation*}
    \mfg \iso \mfg_0 \otimes_\C \C[z,z^{-1}] \oplus \C c \oplus \C d
\end{equation*}
as a vector space (see \cite[\S 18.1]{Ca05} or \cite[\S 7.2]{Ka90}).  Let $\mfh$ be the Cartan
subalgebra of the Kac-Moody algebra $\mfg$.  Let $\mfg_i \subset \mfg$ be the
(finite-dimensional) Lie algebra of $G_i$, where $i\in \{0,m\}$.  Let $\mfu_i
\subset \mfg_i$ be the (nilpotent) Lie algebra of $U_i$.  Finally, let
$\mfu_m^-$ be the opposite nilpotent radical to $\mfu_m$ inside $\mfg_m$.  We
consider the linear map
\begin{equation*}
    \phi : \mfu_0 \arr \mfg \text{ defined by } x \mapsto x \otimes z^{-1}.
\end{equation*}
In order to prove Theorem  \ref{T:main_b} we will need the following lemma.

\begin{lemma}\label{L:phi}
    The map $\phi: \mfu_0 \to \mfu^-_m$ is a $P_0$-equivariant isomorphism of
    vector spaces.
\end{lemma}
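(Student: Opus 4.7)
The plan is to reduce the lemma to an explicit root-space computation on both sides and then read off bijectivity and $P_0$-equivariance directly. First I would identify $\mfu_0$: since $P_0 = P_J$ is the parabolic subgroup of $G_0$ whose Levi has Weyl group $W_J = W_{S_0 \setminus \{s_m\}}$, its nilpotent radical decomposes as $\mfu_0 = \bigoplus_{\gamma} \mfg_{0,\gamma}$, where $\gamma$ runs over positive roots of $\mfg_0$ with $[\gamma : \alpha_m] \geq 1$. The cominuscule hypothesis $[\gamma:\alpha_m] \leq 1$ then collapses this to a sum over positive roots $\gamma$ of $\mfg_0$ with $[\gamma:\alpha_m] = 1$.

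Next I would identify $\mfu_m^-$ in the same language. Using the realization $\mfg \cong \mfg_0 \otimes_\C \C[z,z^{-1}] \oplus \C c \oplus \C d$, the real root spaces of $\mfg$ are $\mfg_{0,\gamma} \otimes z^n$ with associated root $n\delta + \gamma$, for $\gamma$ a root of $\mfg_0$ and $n \in \Z$. Writing $\alpha_0 = \delta - \theta$ with $\theta = \sum a_i \alpha_i$ the highest root of $\mfg_0$, the coefficient of $\alpha_m$ in $n\delta + \gamma$ is $n a_m + [\gamma:\alpha_m]$; cominuscule means $a_m = 1$, so $n\delta + \gamma$ lies in the root system of $\mfg_m$ exactly when $[\gamma:\alpha_m] = -n$. (This already rules out the nonzero imaginary roots $n\delta$, whose $\alpha_m$-coefficient is $n$.) Now $\mfu_m^-$ is the sum of root spaces for the negatives of those positive roots of $\mfg_m$ that involve $\alpha_0$, i.e.\ those with $n \geq 1$. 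Cominuscule forces $n = 1$ and $\gamma$ a positive root of $\mfg_0$ with $[\gamma:\alpha_m] = 1$; negating gives
\[
\mfu_m^- \;=\; \bigoplus_{\gamma} \mfg_{0,\gamma} \otimes z^{-1},
\]
summed over precisely the same set of $\gamma$ that appears in $\mfu_0$.

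With the two descriptions matched, $\phi$ restricts on each summand to the obvious linear isomorphism $\mfg_{0,\gamma} \to \mfg_{0,\gamma} \otimes z^{-1}$, hence is a vector space isomorphism $\mfu_0 \to \mfu_m^-$. For $P_0$-equivariance I would observe that both spaces are $P_0$-stable: the first tautologically, the second because $P_0 \subset G_0$ acts on $\mfg_0 \otimes \C z^{-1}$ through the adjoint action on the tensor factor, and $\mfu_0 \subset \mfg_0$ is stable under that action. The identity $[\xi, x \otimes z^{-1}] = [\xi, x] \otimes z^{-1}$ for $\xi, x \in \mfg_0$ integrates to $\operatorname{Ad}(g)(x \otimes z^{-1}) = (\operatorname{Ad}(g) x) \otimes z^{-1}$ for $g \in P_0$, which says exactly that $\phi$ intertwines the two $P_0$-actions.

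The only substantive step is the affine-root bookkeeping in the second paragraph, and the essential ingredient is the cominuscule hypothesis $a_m = 1$: without it there would be contributions to $\mfu_m^-$ from $n > 1$ with no counterpart in $\mfu_0$, and $\phi$ would fail to be surjective. Once the root-space match is in place, equivariance is automatic because $\phi$ only touches the $z$-factor.
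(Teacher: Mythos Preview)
Your proof is correct and follows essentially the same approach as the paper: both arguments identify $R(\mfu_0)$ and $R(\mfu_m^-)$ via an affine root-space computation that hinges on the cominuscule condition $a_m = 1$, and both deduce $P_0$-equivariance from the adjoint action. Your equivariance step via the loop-algebra identity $[\xi, x \otimes z^{-1}] = [\xi,x]\otimes z^{-1}$ is a slightly cleaner packaging than the paper's root-by-root check, and you extract the bound $n=1$ directly from $|[\gamma:\alpha_m]|\leq 1$ rather than invoking that $\alpha_0$ is cominuscule in $\mfg_m$ as the paper does, but the substance is the same.
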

\begin{proof}
    Let $R$ denote the set of roots of $\mfg$, with simple roots $\Delta :=
    \{\alpha_0, \ldots, \alpha_n\}$.  The simple roots of $\mfg_0$ and $\mfg_m$
    are the subsets of $\Delta$ obtained by omitting $\alpha_0$ and $\alpha_m$
    respectively.  For any subalgebra $\mathfrak{a} \subset \mfg$, we let
    $R(\mathfrak{a})$ denote the set of $\mfh$-weights of $\mathfrak{a}$, and
    let $R^+(\mathfrak{a})$ and $R^-({\mathfrak{a}})$ denote the subsets of
    positive and negative roots respectively.  Let $\theta$ be the highest root
    of $\mfg_0$, and let $\delta = \alpha_0 + \theta$ be the basic imaginary
    root of $\mfg$ (\cite[\S 17.1]{Ca05} or \cite[\S 5.6]{Ka90}).

    We can describe the set of roots of $\mfg$ by
    \begin{equation*}
        R(\mfg) = \{\alpha + k\delta \mid \alpha \in R(\mfg_0), k \in \Z\} 
            \cup \{k\delta \mid k \in \Z_{\neq0}\}.
    \end{equation*}
    The set of positive roots of $\mfg$ is given by
    \begin{equation*}
        R^+(\mfg) = R^+(\mfg_0) \cup \{\alpha + k\delta \in R \mid \alpha \in
            R(\mfg_0), k \in \Z_{>0}\} \cup \{k\delta \mid k \in \Z_{>0}\}.
    \end{equation*}
    Note that $R(\mfu_0) \subset R^+(\mfg_0)$ and $R(\mfu_m) \subset
    R^+(\mfg_m)$.  Using the simple roots of $\mfg_0$ and $\mfg_m$, the roots
    of $\mfu_0$ and $\mfu_m$ can then be written
    \begin{equation*}
        R(\mfu_0) = \left\{\sum^n_{i=1} a_i \alpha_i \in R^+(\mfg) \mid a_m = 1\right\}, \text{ and}
    \end{equation*}
    \begin{equation*}
        R(\mfu_m) = \left\{a_0\alpha_0 + 
            \sum_{i \in [1,n] \setminus \{m\}} a_i \alpha_i \in R^+(\mfg) \mid a_0 = 1\right\},
    \end{equation*}
    where the requirement that $a_m=1$ (resp. $a_0 = 1$) follows from the fact
    that $\alpha_m$ is cominuscule in $\mfg_0$ (resp. $\alpha_0$ is cominuscule
    in $\mfg_m$).

    Every root of $\mfg_0$ can be written uniquely as $\theta - \sum^n_{i=1}
    a_i \alpha_i$ where $a_i \geq 0$ for all $1 \leq i \leq n$.  Since
    $\alpha_m$ is cominuscule, the coefficient of  $\alpha_m$ in $\theta$ is
    equal to $1$.  Using the previous description of $R(\mfu_0)$, it follows
    that $\alpha \in R(\mfg_0)$ is an element of $R(\mfu_0)$ if and only if
    \begin{equation*}
        \alpha = \theta - \sum_{i \in [1,n] \setminus \{m\}} a_i \alpha_i
    \end{equation*}
    for some coefficients $a_i \geq 0$ (in particular, note that any $\alpha$
    of this form cannot belong to $R^-(\mfg_0)$, since it will have positive
    $\alpha_m$-coefficient). 

    Note that for any $\alpha \in R(\mfu_0)$, the homomorphism $\phi$  maps
    $\mfg_\alpha$ isomorphically onto $\mfg_{\alpha -\delta}$.  Thus the
    $\mfh$-weights of $\phi(\mfu_0)$ are precisely
    \begin{multline*}
        \left\{\alpha - \delta \in R \mid  \alpha = \theta - \sum_{i \in [1,n]
            \setminus \{m\}} a_i \alpha_i, \text{ where } a_i \geq 0 
            \text{ for } i \in [1,n] \setminus \{m\}\right\} \\
        = \left\{ -(-\theta + \delta) - \sum_{i \in [1,n] \setminus \{m\}} a_i  \alpha_i 
            \in R^- \mid a_i \geq 0 \text{ for all } i \in [1,n] \setminus \{m\} \right\}.
    \end{multline*}
    This latter set is exactly the negative of the $\mfh$-weights of $\mfu_m$,
    since $(-\theta + \delta) = \alpha_0$, and  since $\alpha_0$ is cominuscule
    in $\mfg_m$ as in Lemma \ref{L:comin}.  We conclude that $\phi(\mfu_0) =
    \mfu_m^-$.  Since $\phi$ is a clearly bijective, it is a vector space
    isomorphism.

    Consider the left adjoint action of $\mfp_0 := \Lie(P_0)$ on $\mfg$.  Under
    this action, each element of the weight space $\mfg_\beta \subset \mfp_0$
    maps $\mfg_\alpha$ into $\mfg_{\alpha+\beta}$ whenever $\alpha + \beta \in
    R(\mfg)$, and annihilates  $\mfg_\alpha$ otherwise.  Recall that $R(\mfp_0)
    = R^+(\mfg_0) \cup \{\sum^n_{i=1} a_i\alpha_i \in R^-(\mfg_0) \mid a_m =
    0\}$, and observe that both $\mfu_0$ and $\mfu^-_m$ are stable under the
    left adjoint action of $\mfp_0$, and moreover that $\phi$ is
    $\mfp_0$-equivariant.  It follows that $\phi$ is $P_0$-equivariant.
\end{proof}

Using the map $\phi:\mfu_0 \to \mfu^-_m$, we construct a map
\begin{equation*}
    \Phi : \mfu_0 \arr \mcX_J = \mcG / \mcP_J : x \mapsto \left[ \exp(\phi(x)) \cdot \mcP_J \right].
\end{equation*}

\begin{lemma}\label{L:fibrebundle}
    $\Phi$ is a $P_0$-equivariant algebraic isomorphism from $\mfu_0$ to an
    open dense subset of $\mcX_J(w_m)$.
\end{lemma}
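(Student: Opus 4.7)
The plan is to factor $\Phi$ through the open Bruhat cell of the finite-dimensional flag variety $G_m / P_m$, which is identified with $\mcX_J(w_m)$ via Lemma \ref{L:isomorphic_copies_of_X} (applied with $K = S_m$ and $I = J$, since $w_m$ is the maximal element of $W^J_{S_m}$). Under this identification, the coset $g \mcP_J \in \mcX_J$ corresponds to $g P_m \in G_m / P_m$ for any $g \in G_m$.

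First I would invoke two standard facts about finite-dimensional algebraic groups. Since $\mfu_m^-$ is a finite-dimensional nilpotent Lie algebra sitting inside $\mfg_m$, the exponential map $\exp: \mfu_m^- \to U_m^-$ is an algebraic isomorphism of affine varieties. Since $U_m^-$ is the unipotent radical of the parabolic opposite to $P_m$ in the finite-dimensional group $G_m$, the multiplication map $U_m^- \times P_m \to G_m$ is an open immersion onto a dense open subvariety; consequently the orbit map $U_m^- \to G_m / P_m$, $u \mapsto u P_m$, is an open embedding whose image is open and dense. Combining these with Lemma \ref{L:phi}, the map $\Phi$ factors as
\begin{equation*}
    \mfu_0 \xrightarrow{\phi} \mfu_m^- \xrightarrow{\exp} U_m^- \hookrightarrow G_m / P_m \iso \mcX_J(w_m),
\end{equation*}
where the first two arrows are algebraic isomorphisms and the third is an open embedding with dense image. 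This exhibits $\Phi$ as an algebraic isomorphism from $\mfu_0$ onto an open dense subset of $\mcX_J(w_m)$.

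For $P_0$-equivariance, I would use that $P_0 = G_0 \cap \mcP_J \subset \mcP_J$. For $p \in P_0$ and $x \in \mfu_0$, applying the $P_0$-equivariance of $\phi$ together with the standard identity $p \exp(y) p^{-1} = \exp(\mathrm{Ad}(p) y)$, I compute
\begin{equation*}
    \Phi(\mathrm{Ad}(p) x)
        = [\exp(\mathrm{Ad}(p)\phi(x)) \mcP_J]
        = [p \exp(\phi(x)) p^{-1} \mcP_J]
        = p \cdot [\exp(\phi(x)) \mcP_J]
        = p \cdot \Phi(x),
\end{equation*}
where the third equality uses $p^{-1} \in \mcP_J$.

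The main obstacle I anticipate is the careful passage between the infinite-dimensional Kac-Moody world of $\mcG$ and the finite-dimensional world of $G_m$. Since $\mfu_m^-$ lies entirely inside the finite-dimensional Levi subalgebra $\mfg_m$, however, both the exponential and the big-cell open embedding are consequences of classical algebraic group theory; the only genuinely Kac-Moody ingredient is the identification $\mcX_J(w_m) \iso G_m / P_m$ supplied by Lemma \ref{L:isomorphic_copies_of_X}, so no further Kac-Moody machinery is needed.
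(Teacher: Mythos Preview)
Your proposal is correct and follows essentially the same route as the paper's proof: both factor $\Phi$ as $\phi$ followed by $\exp$ followed by the big-cell embedding $U_m^- \hookrightarrow G_m/P_m \iso \mcX_J(w_m)$, and both deduce $P_0$-equivariance from the $P_0$-equivariance of $\phi$ together with $P_0 \subset \mcP_J$. Your write-up is more explicit (you spell out the $\mathrm{Ad}$-computation and the identification via Lemma~\ref{L:isomorphic_copies_of_X}), but there is no substantive difference in strategy.
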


\begin{proof}
    The exponential map $\mfu_m^- \arr \exp(\mfu_m^-) =: U^-_m$ is an algebraic
    isomorphism, and $U^-_m \iso U^-_m \cdot [e\mcP_J]$ is an open dense subset
    of $\mcX_J(w_m) = G_m / P_m$, where $e \in \mcG$ is the identity.  Since
    $\phi$ is a $P_0$-equivariant bijection and $P_0 \subset \mcP_J$, the result follows.
\end{proof}

We can now finish the proof of the main theorem.
\begin{proof}[Proof of Theorem \ref{T:main_b}]
    As in Section \ref{S:fibre}, we let $Y = \mcX_J(y)$, where $y = w_0 w_m$. The
    cotangent bundle of $X$ is
    \begin{equation*}
        T^* X = G_0 \times_{P_0} \mfu_0,
    \end{equation*}
    the quotient of $G_0 \times \mfu_0$ by the $P_0$-action $p \cdot (g,x) = (g p,
    p^{-1} x)$. We can define a map
    \begin{equation*}
        \mu : G_0 \times \mfu_0 \arr \mcX_J(y) : (g,x) \mapsto g \cdot \Phi(x),
    \end{equation*}
    where we use the fact that $\Phi(x) \in \mcX_J(w_m) \subset \mcX_J(y)$, which is
    stable under the left action of $G_0$ by Theorem \ref{T:main_a}. But
    $\mu$ is $P_0$-equivariant, so we get an induced map
    \begin{equation*}
        \widetilde{\mu} : G_0 \times_{P_0} \mfu_0 \arr \mcX_J(y).
    \end{equation*} 
    The cotangent bundle map $T^* X \arr X$ sends $(g,x) \mapsto \left[ g P_0
    \right]$.  Since the projection $\mcG \mapsto \mcG/\mcP_{S_m}$ sends $g
    \cdot \Phi(x) \mapsto \left[ g \mcP_{S_m} \right]$, we conclude that the
    diagram
    \begin{equation*}
        \xymatrix{ G_0 \times_{P_0} \mfu_0 \ar[rr] \ar[rd] & & \mcX_J(y) \ar[ld] \\
                        & G_0/P_0 \iso \mcX_{S_m}(w_0) & }
    \end{equation*}
    commutes, and thus $\widetilde{\mu}$ is a morphism of $G_0$-homogeneous
    fibre bundles. Over $\left[ e P_0 \right]$, this map restricts to $\Phi :
    \mfu_0 \arr \mcX_J(w_m)$, which is injective and has open dense image in the
    fibre $\mcX_J(w_m)$. We conclude that the total map $G_0 \times_{P_0} \mfu_0
    \arr \mcX_J(y)$ is injective and has open image.  
\end{proof}

\section{Minuscule Grassmannians}\label{S:minuscule}
A Grassmannian $X = G_0 / P_{S_{0,m}}$ is \emph{minuscule} if $\alpha^\vee_m$
is cominuscule in the dual root system.  The minuscule and cominuscule
Grassmannians coincide in types $A$, $D$, and $E$, but are disjoint in the
other types. There are just two families of Grassmannians which are minuscule
but not cominuscule: $\SO(2n+1) / P_{S_{0,n}}$, the Grassmannian corresponding
to the root $\alpha_n$ in type $B_n$, and $\Sp(2n) / P_{S_{0,1}}$, the
Grassmannian corresponding to the root $\alpha_1$ in $C_n$. The corresponding
Dynkin diagrams are listed in Table \ref{TBL:min}. As algebraic varieties,
$\Sp(2n)/P_{S_{0,1}}$ is isomorphic to $\mathbb{P}^{2n-1}$ and
$\SO(2n+1)/P_{S_{0,n}}$ is isomorphic to $\SO(2n+2)/P_n \iso \SO(2n+2)/P_{n+1}$,
so each minuscule Grassmannians is isomorphic to a cominuscule Grassmannian.
However, the minuscule Grassmannians are distinct as homogeneous spaces, and
their cotangent bundles are distinct as homogeneous bundles. 

Suppose $\alpha_m$ is minuscule but not cominuscule, and let $\mfg$ and $\mcG$
be the affine \emph{twisted} Kac-Moody algebra and group associated to $\mfg_0$
(see \cite[\S 18.4]{Ca05} or \cite{Ka90}, and \cite[\S 6]{Ku02}).  The proof of
Theorem \ref{T:main_a} still works in this setting, and consequently the affine
Schubert variety $Y = \mcX_J(w_0 w_1) \subseteq \mcX_J := \mcG / \mcP_J$ is a
fibre bundle over $X$ with fibre $\mcX_J(w_m)$. Furthermore,
following Lemma \ref{L:comin}, 
we have $\mcX_J(w_m) \iso \mcX_J(w_0) \iso X$ (see Table
\ref{TBL:min} for the proof). 

With all these pieces in place, we might expect that $Y$ is a compactification of $T^*
X$ as in the cominuscule case. However, the argument from the
cominuscule setting breaks down at this point. Specifically, the argument from Section
\ref{S:proof} shows that $Y$ is a compactification of the homogeneous vector
bundle $T := G_0 \times_{P_0} \mfu_m^-$ on $X$. 
However, $T$ is not the
cotangent bundle of $X$. 
Indeed, by the following Lemma, $T$ splits as the direct sum of two $G_0$-homogeneous
vector bundles on $X$, whereas $T^*X$ does not.
\begin{lemma}\label{L:distinct_modules}
As $P_0$-modules, $\mfu^-_m$ splits as the direct sum of two submodules,
while $\mfu_0$ does not.
\end{lemma}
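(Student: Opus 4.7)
The plan is to exploit natural two-step gradings on both $\mfu_0$ and $\mfu_m^-$. In the minuscule-but-not-cominuscule case, $\alpha_m$ appears with coefficient $1$ or $2$ in positive roots of $\mfg_0$, and dually $\alpha_0$ appears with absolute coefficient $1$ or $2$ in roots of $\mfg_m$ (as can be read off Table \ref{TBL:min}). This yields gradings $\mfu_0 = \mfu_0^{(1)} \oplus \mfu_0^{(2)}$ by $\alpha_m$-coefficient and $\mfu_m^- = \mfu_m^{-,(-1)} \oplus \mfu_m^{-,(-2)}$ by $\alpha_0$-coefficient, with both pieces nonzero. Throughout, write $P_0 = L_0 \ltimes U_0$ with $L_0 \iso G_J$.

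For $\mfu_0$, a direct check in each case shows that the two graded pieces are non-isomorphic irreducible $L_0$-modules: the standard and the $\Lambda^2$ of $L_0 \iso GL_n$ in type $(B_n,\alpha_n)$; the standard $\Sp(2n-2)$-module and the trivial module in type $(C_n,\alpha_1)$. Since any $P_0$-submodule of $\mfu_0$ is automatically $L_0$-stable, the non-isomorphism forces any direct-sum decomposition $\mfu_0 = W_1 \oplus W_2$ with both $W_i \neq 0$ to coincide (up to order) with the $L_0$-decomposition. A short bracket calculation shows $[\mfu_0^{(1)}, \mfu_0^{(1)}] = \mfu_0^{(2)}$ (e.g.~$[E_{e_i}, E_{e_j}] = E_{e_i+e_j}$ in type $B_n$; $[E_{e_1-e_j}, E_{e_1+e_j}] = E_{2e_1}$ in type $C_n$), so $\mfu_0^{(1)}$ is not stable under the adjoint action of $\mfu_0 \subset \mfp_0$, contradicting $P_0$-stability.

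For $\mfu_m^-$, the $\alpha_0$-grading is $L_0$-stable since $\Lie(L_0) \subset \mfg_0$ has all weights of $\alpha_0$-coefficient $0$. To upgrade this to $P_0$-stability, I would argue that $U_0$ acts trivially on $\mfu_m^-$: the $P_0$-module structure on $\mfu_m^-$ comes from identifying $\mfu_m^-$ with the big cell of the fibre $\mcX_J(w_m) \iso G_m/P_m$, on which $P_0 \subset \mcP_{S_m}$ acts through the Levi projection $\mcP_{S_m} \twoheadrightarrow G_m$. This projection kills $U_0$, whose root spaces have positive $\alpha_m$-coefficient and thus lie in the pro-unipotent radical of $\mcP_{S_m}$. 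The main obstacle is to carefully carry out the minuscule analog of the Section \ref{S:proof} construction in the twisted-affine setting and verify this factorization; once done, the $\alpha_0$-grading automatically gives the desired decomposition of $\mfu_m^-$ into two $P_0$-submodules.
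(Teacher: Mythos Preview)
Your argument for $\mfu_0$ is correct and in fact more complete than the paper's. The paper simply observes that $\mfp_0$ can send a root space of $\alpha_m$-coefficient $1$ to one of $\alpha_m$-coefficient $2$; you additionally justify (via the $L_0$-irreducible decomposition) why this single obstruction rules out \emph{every} nontrivial direct-sum decomposition, which the paper leaves implicit.

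For $\mfu_m^-$ you have the right decomposition but take an unnecessary detour. You already note that the $\alpha_0$-grading is $\Lie(L_0)$-stable because every weight of $\Lie(L_0)\subset\mfg_0$ has $\alpha_0$-coefficient $0$. The point you are missing is that the \emph{same} sentence applies verbatim to all of $\mfp_0$: since $\mfp_0\subset\mfg_0$, every weight of $\mfp_0$ (including those of $\mfu_0$) has $\alpha_0$-coefficient $0$, so the adjoint action of $\mfp_0$ preserves the $\alpha_0$-coefficient on the nose. This is exactly the paper's argument (phrased there as preservation of short versus long roots, since the $\alpha_0$-coefficient $-1$ and $-2$ pieces are precisely the short and long root spaces of $\mfu_m^-$). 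Your proposed route through the Levi projection $\mcP_{S_m}\twoheadrightarrow G_m$ and the claim that $U_0$ acts trivially is correct in outcome, but it is extra machinery for a one-line weight argument, and as you yourself flag, it requires redoing the Section~\ref{S:proof} construction in the twisted setting before it is rigorous.
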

\begin{proof}
Let $\delta = \alpha_0 + \theta_0$ be the
basic imaginary root of $\mfg$, where $\theta_0$ is the highest \emph{short}
root of $\mfg_0$ (\cite[\S 17.1]{Ca05} or \cite[\S 8.3]{Ka90}).  For any subalgebra $\mathfrak{a} \subset \mfg$, 
let $R_{s}(\mathfrak{a})$ (resp.
$R_{l}(\mathfrak{a})$) denote the set of real short (resp. long) $\mfh$-weights of
$\mathfrak{a}$  (see \cite[\S 17.2]{Ca05} or \cite[\S 5.1]{Ka90}). The set of roots of $\mfg$ is given by
\[
    \{\alpha + k\delta: \alpha \in R_{s}(\mfg_0), k \in \Z\} 
        \cup \{\alpha + 2k\delta: \alpha \in R_{l}(\mfg_0), k \in \Z\} 
        \cup \{k\delta: k \in \Z_{\neq0}\}.
\]
Moreover, the $\mfh$-weights of $\mfu_0$ and $\mfu^-_m$ are given by
\[
R(\mfu_0) =  \left\{\sum_{i \in [1,n]} a_i \alpha_i \in R^+(\mfg): a_m \in \{1,2\}\right\}, \text{and }
\]
\[
R(\mfu^-_m) =  \left\{a_0\alpha_0 + \sum_{i \in [1,n] \setminus \{m\}} a_i \alpha_i \in R^-(\mfg): a_0 \in \{-1,-2\}\right\}.
\]
Write $\mfu^-_m = \mfu^-_{m,s} \oplus \mfu^-_{m,l}$, where $\mfu^-_{m,s} :=
\oplus_{\alpha \in R_s(\mfu^-_m)}\mfg_\alpha$ and $\mfu^-_{m,l} :=
\oplus_{\alpha \in R_l(\mfu^-_m)}\mfg_\alpha$.  The short (resp. long) $\mfh$-weights of
$\mfu^-_{m}$ are precisely those with $\alpha_0$ coefficient $a_0 = -1$ (resp.
$a_0 = -2$)  in the simple root basis.  It follows that the left adjoint action of
$P_0$ preserves the long and short roots of $u^-_m$, and hence $T = G_0
\times_{P_0} u_{m,s}^- \oplus G_0 \times_{P_0} u_{m,l}^-$ is a direct sum of
two homogeneous vector bundles. 
On the other hand $\mfu_0$ does not split as a $P_0$-module, since the Lie 
algebra $\mfp_0$ of $P_0$ can take short roots of $\mfu_0$ (which have $\alpha_m$
coefficient $1$)  to long roots (which have $\alpha_m$ coefficient
$2$).
\end{proof}

Let $\mfh_0$ and $H_0$ denote the Cartan subalgebra and subgroup of $\mfg_0$ and $G_0$ respectively.
An $H_0$-module $M$ is \emph{attractive} if there is some $\omega$ in $\mfh_0$ such that 
$\alpha(\omega) > 0$ for all $\mfh_0$-weights $\alpha$ of $M$.
The fact that $Y$ cannot be the 
compactification of $T^*X$ follows from the following more general result.
\begin{lemma}\label{L:attractive}
Given $P_0$-modules $U$ and $V$, suppose there exists an element $\omega \in \mfh_0$ with the property
that $\alpha(\omega) > 0$ for any $\mfh_0$-weight $\alpha$ of $U$ or $V$.
Furthermore, if both $G_0 \times_{P_0} U$ and $G_0 \times_{P_0} V$
embed as open dense homogeneous $G_0$-bundles in a homogeneous $G_0$-fibre bundle $Y$, 
then $U$ and $V$ are isomorphic as $P_0$-modules.
\end{lemma}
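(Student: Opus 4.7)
The plan is to realize both $U$ and $V$ as open dense $P_0$-stable neighbourhoods of a common $P_0$-fixed point inside a single fibre of $Y$, and then to identify each of them with the tangent space to that fibre at that point.

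Write $Y = G_0 \times_{P_0} F$, where $F$ is the fibre over $[eP_0]$; the embeddings $G_0 \times_{P_0} U \incl Y$ and $G_0 \times_{P_0} V \incl Y$ restrict over $[eP_0]$ to $P_0$-equivariant open dense embeddings $U \incl F$ and $V \incl F$. In particular $F$ is irreducible and $U \cap V$ is a nonempty open subset of $F$. After replacing $\omega$ by a nearby integral coweight with the same sign on the finitely many weights of $U$ and $V$, exponentiation yields a one-parameter subgroup $\lambda : \C^* \arr H_0 \subseteq P_0$ and hence a $\C^*$-action on $F$. Because $\alpha(\omega) > 0$ for every weight of $U$ (resp.\ $V$), the flow $\lambda(t) \cdot x$ converges as $t \to 0$ to the origin $0_U$ of $U$ for every $x \in U$ (resp.\ to $0_V$ for every $x \in V$). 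Choosing any $x \in U \cap V$ and using uniqueness of limits in the variety $F$, I conclude $p := 0_U = 0_V$ as points of $F$.

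Since $p \in U$ and $U$ is open in $F$, the point $p$ is a smooth point of $F$, and the $P_0$-equivariant inclusion $U \incl F$ induces a $P_0$-equivariant linear isomorphism $U = T_{0_U} U \iso T_p F$; the analogous identification for $V$ yields $V \iso T_p F$, and composing gives the desired $P_0$-module isomorphism $U \iso V$. The main obstacle is the coincidence $0_U = 0_V$ inside $F$: this is precisely where the common attractive element $\omega$ is used, and without such a rigidification the two open embeddings could in principle present non-isomorphic $P_0$-module structures on the same smooth point of $F$. The remaining steps — openness of $U$ in $F$, smoothness of $F$ at $p$, and the passage to tangent spaces — are then entirely formal.
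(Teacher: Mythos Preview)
Your argument is correct and follows essentially the same route as the paper's proof: both restrict to the fibre $F$ over $[eP_0]$, use the attractive weight $\omega$ to flow a point of $U\cap V$ to the common origin $0_U=0_V$, and then identify $U$ and $V$ with the tangent space at that point. The only cosmetic difference is that the paper uses the discrete sequence $\exp(-n\omega)$ directly rather than passing to a nearby integral cocharacter, but the content is the same.
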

\begin{proof}
 We can think of $U$ and $V$ as open dense subsets of the fibre over the identity in $Y$. As such, the intersection of 
 $U$ and $V$ is non-empty. 
 Let $y$ be a point of the intersection.  Since $\alpha(\omega) > 0$ for all $\mfh_0$-weights $\alpha$ of $U$ or $V$,
 the limit 
 $$\lim_{n \to \infty} \exp(-n \omega).y$$ 
 exists and is equal to both $0_U$ and $0_V$, the zero elements of $U$ and $V$,
 which in particular must be equal. 
 The sets $U$ and $V$ are both open, and $0:=0_U=0_V$ is a $P_0$-fixed point in both $U$ and $V$, so
 $U \iso T_0 U = T_0 V \iso V$ as $P_0$-modules. 
\end{proof}

\begin{corollary}
There is no open embedding of $T^*X$ into $Y$
as $G_0$-homogeneous fibre bundles over $X$.
\end{corollary}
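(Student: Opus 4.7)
The plan is a proof by contradiction using Lemma \ref{L:attractive}. Suppose there were an open embedding $\iota : T^*X \incl Y$ of $G_0$-homogeneous fibre bundles over $X$. Since $Y$ is irreducible (as a Schubert variety) and $\dim Y = \dim X + \dim \mcX_J(w_m) = 2\dim X = \dim T^*X$, the image $\iota(T^*X)$ is automatically open and dense in $Y$. The minuscule analog of the construction from Section \ref{S:proof} (noted in the text preceding Lemma \ref{L:distinct_modules}) already exhibits $T = G_0 \times_{P_0} \mfu_m^-$ as an open dense $G_0$-homogeneous subbundle of $Y$. I would then apply Lemma \ref{L:attractive} to the pair $(U,V) = (\mfu_0, \mfu_m^-)$ to force an isomorphism $\mfu_0 \iso \mfu_m^-$ of $P_0$-modules, in direct conflict with Lemma \ref{L:distinct_modules}.

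The core task is to exhibit an element $\omega \in \mfh_0$ on which every $\mfh_0$-weight of $\mfu_0$ and $\mfu_m^-$ is strictly positive. I would take $\omega = \omega_m^\vee$, the fundamental coweight dual to $\alpha_m$, characterized by $\alpha_i(\omega_m^\vee) = \delta_{im}$ for $1 \leq i \leq n$. For a weight $\alpha = \sum_{i=1}^n a_i \alpha_i$ of $\mfu_0$, the description $a_m \in \{1,2\}$ supplied by Lemma \ref{L:distinct_modules} gives $\alpha(\omega_m^\vee) = a_m > 0$. For a weight $\alpha = a_0 \alpha_0 + \sum_{i \neq m} a_i \alpha_i$ of $\mfu_m^-$ with $a_0 \in \{-1,-2\}$, I would use $\delta|_{\mfh_0} = 0$ together with $\delta = \alpha_0 + \theta_0$ to get $\alpha_0|_{\mfh_0} = -\theta_0$, so that $\alpha(\omega_m^\vee) = -a_0 c_m$, where $c_m$ is the $\alpha_m$-coefficient of the highest short root $\theta_0$ of $\mfg_0$.

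A brief case check handles the positivity: in $B_n$ with $m = n$ one has $\theta_0 = \alpha_1 + \cdots + \alpha_n$, while in $C_n$ with $m = 1$ one has $\theta_0 = \alpha_1 + 2\alpha_2 + \cdots + 2\alpha_{n-1} + \alpha_n$, and in both cases $c_m = 1$. Hence $\alpha(\omega_m^\vee) = -a_0 \in \{1,2\} > 0$, so $\omega_m^\vee$ satisfies the hypothesis of Lemma \ref{L:attractive}, and the argument concludes with the promised contradiction. I expect the main subtlety to be precisely this weight computation, where the affine root $\alpha_0$ must be restricted to the finite Cartan in the twisted setting and the $\alpha_m$-coefficient of $\theta_0$ must be identified; once $\omega_m^\vee$ has been verified to work, Lemmas \ref{L:attractive} and \ref{L:distinct_modules} close the argument with no further effort.
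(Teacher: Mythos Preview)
Your proof is correct and follows essentially the same approach as the paper: contradiction via Lemma~\ref{L:attractive} with the fundamental coweight $\omega_m^\vee$ as the attractive element, followed by Lemma~\ref{L:distinct_modules}. The only cosmetic difference is that the paper verifies positivity on $\mfu_m^-$ by noting that its roots are of the form $\alpha-\delta$ or $\alpha-2\delta$ with $\alpha\in R(\mfu_0)$ and $\delta|_{\mfh_0}=0$ (so $\mfu_0$ and $\mfu_m^-$ share the same $\mfh_0$-weights), whereas you compute $\alpha_0|_{\mfh_0}=-\theta_0$ and check $c_m=1$ case by case; both arrive at the same conclusion.
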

\begin{proof}
 Suppose on the contrary that such an embedding exists.
 Note that $\alpha(\omega_m) > 0$ for all $\alpha \in R(\mfu_0)$, where
 $\omega_m \in \mfh_0$ is the fundamental weight dual to the simple root $\alpha_m$.
 Moreover, the roots of $\mfu^-_m$ are all of the form $\alpha - \delta$ or
 $\alpha - 2\delta$ with $\alpha \in R(\mfu_0)$.  Since $\delta(\omega_m) = 0$,
 it follows that $\beta(\omega_m) > 0$ for all $\beta \in R(\mfu^-_m)$
 (indeed, $\mfu_0$ and $\mfu^-_m$ have the same $\mfh_0$-weights since $\delta(\omega)=0$ for
 any $\omega \in \mfh_0$).
 By Lemma \ref{L:attractive}, it follows that $\mfu_0$ and $\mfu^-_m$
 are isomorphic as $P_0$-modules,
 contradicting Lemma \ref{L:distinct_modules}.
\end{proof}

\bibliographystyle{alpha}

\begin{thebibliography}{XX}

\bibitem{BL00}
Sara Billey and V. Lakshmibai.
\newblock Singular Loci of Schubert Varieties.
\newblock Birkh\"auser, 2000.

\bibitem{Ca05}
Roger Carter.
\newblock Lie Algebras of Finite and Affine Type.
\newblock Cambridge University Press, 2005.

\bibitem{Ka90}
Victor Kac.
\newblock Infinite Dimensional Lie Algebras, Third Edition.
\newblock Cambridge University Press, 1990.

\bibitem{Ku02}
Shrawan Kumar.
\newblock Kac-Moody Groups, their Flag Varieties and Representation Theory.
\newblock Birkh\"auser, 2002.

\bibitem{La14}
V. Lakshmibai.
\newblock \emph{Cotangent bundle to the Grassmann variety}.
\newblock Preprint, arXiv:1505.00038, 2015. 

\bibitem{LS78} V. Lakshmibai and C.S. Seshadri. 
\newblock \emph{Geometry of $G/P-II$}. 
\newblock Proc. Ind. Acad. Sci., 87A (1978), 1-54.

\bibitem{Lu90} G. Lusztig. 
\newblock \emph{Canonical bases arising from quantized enveloping algebras}. 
\newblock J. Amer. Math. Soc. 3, 1990, 447-498.

\bibitem{RS14}
Edward Richmond and William Slofstra.
\newblock \emph{Billey-Postnikov decompositions and the fibre bundle structure of Schubert varieties}.
\newblock Preprint, arXiv:1408.0084, 2014. 

\bibitem{St82} E. Strickland. 
\newblock\emph{On the conormal bundle of the determinantal variety}. 
\newblock J. Algebra, vol 75(1982), 523-537.
\end{thebibliography}

\begin{table}

  \begin{tikzpicture}[scale=.4]
    \draw (-1,0) node[anchor=east]  {$A_{n}$};
    \foreach \x in {0,2,6,8}
    \draw[thick,fill=white!70] (\x cm,0) circle (.3cm);
    \draw[thick,fill=black!70] (4 cm,0) circle (.3cm);
    \draw[dotted, thick] (0.3 cm,0) -- +(1.4 cm,0);
    \foreach \y in {2.3,4.3}
    \draw[thick] (\y cm,0) -- +(1.4 cm,0);
    \draw[dotted, thick] (6.3 cm,0) -- +(1.4 cm,0);
    \draw (0,.8) node {$\scriptscriptstyle{1}$};
    \draw (2,.8) node {$\scriptscriptstyle{m-1}$};
    \draw (4,.8) node {$\scriptscriptstyle{m}$};
    \draw (6,.8) node {$\scriptscriptstyle{m+1}$};
    \draw (8,.8) node {$\scriptscriptstyle{n}$};

    \draw (15,0) node[anchor=east]  {$\tilde{A}_{n}$};
    \foreach \x in {16,18,20,22,24}
    \draw[thick,fill=white!70] (\x cm,0) circle (.3cm);
    \draw[thick, fill=black!70] (20 cm,-2 cm) circle (.3cm);
    \draw[dotted,thick] (16.3 cm,0) -- +(1.4 cm,0);   
    \foreach \y in {18.3,20.3}
    \draw[thick] (\y cm,0) -- +(1.4 cm,0);
    \draw[dotted,thick] (22.3 cm,0) -- +(1.4 cm,0); 
    \draw[thick] (16.2 cm,-0.2 cm) -- +(3.5 cm,-1.65 cm); 
    \draw[thick] (20.3 cm,-1.9 cm) -- +(3.5 cm,1.65 cm); 
    \draw (16,.8) node {$\scriptscriptstyle{1}$};
    \draw (18,.8) node {$\scriptscriptstyle{m-1}$};
    \draw (20,.8) node {$\scriptscriptstyle{m}$};
    \draw (22,.8) node {$\scriptscriptstyle{m+1}$};
    \draw (24,.8) node {$\scriptscriptstyle{n}$};
    \draw (20,-1.2) node {$\scriptscriptstyle{0}$};
    
%     \draw (12,.6) node[anchor=west] {$Gr(m,n) = SL_{\C}(n)/P_m$};
%     \draw (12, -.6) node[anchor=west] {$\text{for } 1 \leq m \leq n-1$};
  \end{tikzpicture}

  \vspace{3mm}

  \begin{tikzpicture}[scale=.4]
    \draw (-1,0) node[anchor=east]  {$B_{n}$};
    \foreach \x in {2,4,6,8}
    \draw[thick,fill=white!70] (\x cm,0) circle (.3cm);
    \draw[thick,fill=black!70] (0 cm,0) circle (.3cm);
    \draw[thick] (0.3 cm,0) -- +(1.4 cm,0);
    \draw[dotted,thick] (2.3 cm,0) -- +(1.4 cm,0);
    \draw[thick] (4.3 cm,0) -- +(1.4 cm,0);
    \draw[thick] (6.3 cm, .1 cm) -- +(1.4 cm,0);
    \draw[thick] (6.3 cm, -.1 cm) -- +(1.4 cm,0);
    \draw[thick] (6.9 cm, .3 cm) -- +(.3 cm, -.3 cm);
    \draw[thick] (6.9 cm, -.3 cm) -- +(.3 cm, .3 cm);
    \draw (0,.8) node {$\scriptscriptstyle{1}$};
    \draw (2,.8) node {$\scriptscriptstyle{2}$};
    \draw (4,.8) node {$\scriptscriptstyle{n-2}$};
    \draw (6,.8) node {$\scriptscriptstyle{n-1}$};
    \draw (8,.8) node {$\scriptscriptstyle{n}$};
    
    \draw (15,0) node[anchor=east]  {$\tilde{B}_{n}$};
    \foreach \x in {18,20,22,24}
    \draw[thick,fill=white!70] (\x cm,0) circle (.3cm);
    \draw[xshift=15 cm,thick,fill=black!70] (30: 17 mm) circle (.3cm);
    \draw[xshift=15 cm,thick,fill=black!70] (-30: 17 mm) circle (.3cm);
    \draw[dotted,thick] (18.3 cm,0) -- +(1.4 cm,0);
    \foreach \y in {20.3}
    \draw[thick] (\y cm,0) -- +(1.4 cm,0);
    \draw[thick] (22.3 cm, .1 cm) -- +(1.4 cm,0);
    \draw[thick] (22.3 cm, -.1 cm) -- +(1.4 cm,0);
    \draw[thick] (22.9 cm, .3 cm) -- +(.3 cm, -.3 cm);
    \draw[thick] (22.9 cm, -.3 cm) -- +(.3 cm, .3 cm);
    \draw[xshift=17.5 cm,thick] (30: 3 mm) -- (136: 10.5 mm);
    \draw[xshift=17.5 cm,thick] (-30: 3 mm) -- (-136: 10.5 mm);
    \draw (16.5,1.5) node {$\scriptscriptstyle{1}$};
    \draw (16.5,-.2) node {$\scriptscriptstyle{0}$};
    \draw (18,.8) node {$\scriptscriptstyle{2}$};
    \draw (20,.8) node {$\scriptscriptstyle{n-2}$};
    \draw (22,.8) node {$\scriptscriptstyle{n-1}$};
    \draw (24,.8) node {$\scriptscriptstyle{n}$};
  \end{tikzpicture}
  
    \vspace{3mm}

  \begin{tikzpicture}[scale=.4]
    \draw (-1,0) node[anchor=east]  {$C_{n}$};
    \foreach \x in {0,2,4,6}
    \draw[thick,fill=white!70] (\x cm,0) circle (.3cm);
    \draw[thick,fill=black!70] (8 cm,0) circle (.3cm);
    \draw[thick] (0.3 cm,0) -- +(1.4 cm,0);
    \draw[dotted,thick] (2.3 cm,0) -- +(1.4 cm,0);
    \draw[thick] (4.3 cm,0) -- +(1.4 cm,0);
    \draw[thick] (6.3 cm, .1 cm) -- +(1.4 cm,0);
    \draw[thick] (6.3 cm, -.1 cm) -- +(1.4 cm,0);
    \draw[thick] (6.9 cm, 0 cm) -- +(.3 cm, .3 cm);
    \draw[thick] (6.9 cm, 0 cm) -- +(.3 cm, -.3 cm);
    \draw (0,.8) node {$\scriptscriptstyle{1}$};
    \draw (2,.8) node {$\scriptscriptstyle{2}$};
    \draw (4,.8) node {$\scriptscriptstyle{n-2}$};
    \draw (6,.8) node {$\scriptscriptstyle{n-1}$};
    \draw (8,.8) node {$\scriptscriptstyle{n}$};
    
    \draw (15,0) node[anchor=east]  {$\tilde{C}_{n}$};
    \foreach \x in {16,18,20,22,24,26}
    \draw[thick,fill=white!70] (\x cm,0) circle (.3cm);
    \foreach \x in {16,26}
    \draw[thick,fill=black!70] (\x cm,0) circle (.3cm);
    \draw[dotted,thick] (20.3 cm,0) -- +(1.4 cm,0);
    \foreach \y in {18.3,22.3}
    \draw[thick] (\y cm,0) -- +(1.4 cm,0);
    \draw[thick] (24.3 cm, .1 cm) -- +(1.4 cm,0);
    \draw[thick] (24.3 cm, -.1 cm) -- +(1.4 cm,0);
    \draw[thick] (24.9 cm, 0 cm) -- +(.3 cm, .3 cm);
    \draw[thick] (24.9 cm, 0 cm) -- +(.3 cm, -.3 cm);
    \draw[thick] (16.3 cm, .1 cm) -- +(1.4 cm,0);
    \draw[thick] (16.3 cm, -.1 cm) -- +(1.4 cm,0);
    \draw[thick] (17.2 cm, 0 cm) -- +(-.3 cm, .3 cm);
    \draw[thick] (17.2 cm, 0 cm) -- +(-.3 cm, -.3 cm);
    \draw (16,.8) node {$\scriptscriptstyle{0}$};
    \draw (18,.8) node {$\scriptscriptstyle{1}$};
    \draw (20,.8) node {$\scriptscriptstyle{2}$};
    \draw (22,.8) node {$\scriptscriptstyle{n-2}$};
    \draw (24,.8) node {$\scriptscriptstyle{n-1}$};
    \draw (26,.8) node {$\scriptscriptstyle{n}$};
  \end{tikzpicture}

  \vspace{3mm}

  \begin{tikzpicture}[scale=.4]
    \draw (-1,0) node[anchor=east]  {$D_{n}$};
    \foreach \x in {2,4,6,8}
    \draw[thick,fill=white!70] (\x cm,0) circle (.3cm);
    \draw[thick,fill=black!70] (0 cm,0) circle (.3cm);
    \draw[xshift=8 cm,thick,fill=white!70] (30: 17 mm) circle (.3cm);
    \draw[xshift=8 cm,thick,fill=white!70] (-30: 17 mm) circle (.3cm);
    \draw[dotted,thick] (4.3 cm,0) -- +(1.4 cm,0);
    \foreach \y in {0.3, 2.3,6.3}
    \draw[thick] (\y cm,0) -- +(1.4 cm,0);
    \draw[xshift=8 cm,thick] (30: 3 mm) -- (30: 14 mm);
    \draw[xshift=8 cm,thick] (-30: 3 mm) -- (-30: 14 mm);
    \draw (0,.8) node {$\scriptscriptstyle{1}$};
    \draw (2,.8) node {$\scriptscriptstyle{2}$};
    \draw (4,.8) node {$\scriptscriptstyle{3}$};
    \draw (6,.8) node {$\scriptscriptstyle{n-3}$};
    \draw (8,.8) node {$\scriptscriptstyle{n-2}$};
    \draw (9.45,1.6) node {$\scriptscriptstyle{n-1}$};
    \draw (9.45,-.2) node {$\scriptscriptstyle{n}$};
    
    \draw (15,0) node[anchor=east]  {$\tilde{D}_{n}$};
    \foreach \x in {18,20,22,24}
    \draw[thick,fill=white!70] (\x cm,0) circle (.3cm);
    \draw[xshift=24 cm,thick,fill=white!70] (30: 17 mm) circle (.3cm);
    \draw[xshift=24 cm,thick,fill=white!70] (-30: 17 mm) circle (.3cm);
    \draw[xshift=15 cm,thick,fill=black!70] (30: 17 mm) circle (.3cm);
    \draw[xshift=15 cm,thick,fill=black!70] (-30: 17 mm) circle (.3cm);
    \draw[dotted,thick] (20.3 cm,0) -- +(1.4 cm,0);
    \foreach \y in {18.3, 22.3}
    \draw[thick] (\y cm,0) -- +(1.4 cm,0);
    \draw[xshift=24 cm,thick] (30: 3 mm) -- (30: 14 mm);
    \draw[xshift=24 cm,thick] (-30: 3 mm) -- (-30: 14 mm);
    \draw[xshift=17.5 cm,thick] (30: 3 mm) -- (136: 10.5 mm);
    \draw[xshift=17.5 cm,thick] (-30: 3 mm) -- (-136: 10.5 mm);
    \draw (16.5,1.5) node {$\scriptscriptstyle{1}$};
    \draw (16.5,-.2) node {$\scriptscriptstyle{0}$};
    \draw (18,.8) node {$\scriptscriptstyle{2}$};
    \draw (20,.8) node {$\scriptscriptstyle{3}$};
    \draw (22,.8) node {$\scriptscriptstyle{n-3}$};
    \draw (24,.8) node {$\scriptscriptstyle{n-2}$};
    \draw (25.45,1.6) node {$\scriptscriptstyle{n-1}$};
    \draw (25.45,-.2) node {$\scriptscriptstyle{n}$};
  \end{tikzpicture}

  \vspace{3mm}

\begin{tikzpicture}[scale=.4]
    \draw (-1,0) node[anchor=east]  {$D_{n}$};
    \foreach \x in {0,2,4,6,8}
    \draw[thick,fill=white!70] (\x cm,0) circle (.3cm);
    \draw[xshift=8 cm,thick,fill=white!70] (30: 17 mm) circle (.3cm);
    \draw[xshift=8 cm,thick,fill=black!70] (-30: 17 mm) circle (.3cm);
    \draw[dotted,thick] (4.3 cm,0) -- +(1.4 cm,0);
    \foreach \y in {0.3, 2.3,6.3}
    \draw[thick] (\y cm,0) -- +(1.4 cm,0);
    \draw[xshift=8 cm,thick] (30: 3 mm) -- (30: 14 mm);
    \draw[xshift=8 cm,thick] (-30: 3 mm) -- (-30: 14 mm);
    \draw (0,.8) node {$\scriptscriptstyle{1}$};
    \draw (2,.8) node {$\scriptscriptstyle{2}$};
    \draw (4,.8) node {$\scriptscriptstyle{3}$};
    \draw (6,.8) node {$\scriptscriptstyle{n-3}$};
    \draw (8,.8) node {$\scriptscriptstyle{n-2}$};
    \draw (9.45,1.6) node {$\scriptscriptstyle{n-1}$};
    \draw (9.45,-.2) node {$\scriptscriptstyle{n}$};
    
    \draw (15,0) node[anchor=east]  {$\tilde{D}_{n}$};
    \foreach \x in {18,20,22,24}
    \draw[thick,fill=white!70] (\x cm,0) circle (.3cm);
    \draw[xshift=24 cm,thick,fill=white!70] (30: 17 mm) circle (.3cm);
    \draw[xshift=24 cm,thick,fill=black!70] (-30: 17 mm) circle (.3cm);
    \draw[xshift=15 cm,thick,fill=white!70] (30: 17 mm) circle (.3cm);
    \draw[xshift=15 cm,thick,fill=black!70] (-30: 17 mm) circle (.3cm);
    \draw[dotted,thick] (20.3 cm,0) -- +(1.4 cm,0);
    \foreach \y in {18.3, 22.3}
    \draw[thick] (\y cm,0) -- +(1.4 cm,0);
    \draw[xshift=24 cm,thick] (30: 3 mm) -- (30: 14 mm);
    \draw[xshift=24 cm,thick] (-30: 3 mm) -- (-30: 14 mm);
    \draw[xshift=17.5 cm,thick] (30: 3 mm) -- (136: 10.5 mm);
    \draw[xshift=17.5 cm,thick] (-30: 3 mm) -- (-136: 10.5 mm);
    \draw (16.5,1.5) node {$\scriptscriptstyle{1}$};
    \draw (16.5,-.2) node {$\scriptscriptstyle{0}$};
    \draw (18,.8) node {$\scriptscriptstyle{2}$};
    \draw (20,.8) node {$\scriptscriptstyle{3}$};
    \draw (22,.8) node {$\scriptscriptstyle{n-3}$};
    \draw (24,.8) node {$\scriptscriptstyle{n-2}$};
    \draw (25.45,1.6) node {$\scriptscriptstyle{n-1}$};
    \draw (25.45,-.2) node {$\scriptscriptstyle{n}$};
  \end{tikzpicture}

    \vspace{3mm}

  \begin{tikzpicture}[scale=.4]
    \draw (-1,0) node[anchor=east]  {$E_{6}$};
    \foreach \x in {2,4,6,8}
    \draw[thick,fill=white!70] (\x cm,0) circle (.3cm);
    \draw[thick,fill=white!70] (4 cm, 2 cm) circle (.3cm);
    \draw[thick,fill=black!70] (0 cm,0) circle (.3cm);
    \foreach \y in {0.3, 2.3, 4.3, 6.3}
    \draw[thick] (\y cm,0) -- +(1.4 cm,0);
    \draw[thick] (4 cm,.3 cm) -- +(0,1.4 cm);
    \draw (0,.8) node {$\scriptscriptstyle{1}$};
    \draw (2,.8) node {$\scriptscriptstyle{2}$};
    \draw (4.5,.8) node {$\scriptscriptstyle{3}$};
    \draw (6,.8) node {$\scriptscriptstyle{4}$};
    \draw (8,.8) node {$\scriptscriptstyle{5}$};
    \draw (4.5,2.8) node {$\scriptscriptstyle{6}$};
    
    \draw (15,0) node[anchor=east]  {$\tilde{E}_{6}$};
    \foreach \x in {18,20,22,24}
    \draw[thick,fill=white!70] (\x cm,0) circle (.3cm);
    \draw[thick,fill=white!70] (20 cm, 2 cm) circle (.3cm);
    \draw[thick,fill=black!70] (20 cm, 4 cm) circle (.3cm);
    \draw[thick,fill=black!70] (16 cm,0) circle (.3cm);
    \foreach \y in {16.3, 18.3, 20.3, 22.3}
    \draw[thick] (\y cm,0) -- +(1.4 cm,0);
    \draw[thick] (20 cm,.3 cm) -- +(0,1.4 cm);
    \draw[thick] (20 cm,2.3 cm) -- +(0,1.4 cm);
    \draw (16,.8) node {$\scriptscriptstyle{1}$};
    \draw (18,.8) node {$\scriptscriptstyle{2}$};
    \draw (20.5,.8) node {$\scriptscriptstyle{3}$};
    \draw (22,.8) node {$\scriptscriptstyle{4}$};
    \draw (24,.8) node {$\scriptscriptstyle{5}$};
    \draw (20.5,2.8) node {$\scriptscriptstyle{6}$};
    \draw (20.5,4.8) node {$\scriptscriptstyle{0}$};
  \end{tikzpicture}

\vspace{3mm}

  \begin{tikzpicture}[scale=.4]
    \draw (-1,0) node[anchor=east]  {$E_{7}$};
    \foreach \x in {0,2,4,6,8}
    \draw[thick,fill=white!70] (\x cm,0) circle (.3cm);
    \draw[thick,fill=white!70] (4 cm, 2 cm) circle (.3cm);
    \draw[thick,fill=black!70] (10 cm,0) circle (.3cm);
    \foreach \y in {0.3, 2.3, 4.3, 6.3,8.3}
    \draw[thick] (\y cm,0) -- +(1.4 cm,0);
    \draw[thick] (4 cm,.3 cm) -- +(0,1.4 cm);
    \draw (0,.8) node {$\scriptscriptstyle{1}$};
    \draw (2,.8) node {$\scriptscriptstyle{2}$};
    \draw (4.5,.8) node {$\scriptscriptstyle{3}$};
    \draw (6,.8) node {$\scriptscriptstyle{4}$};
    \draw (8,.8) node {$\scriptscriptstyle{5}$};
    \draw (10,.8) node {$\scriptscriptstyle{6}$};
    \draw (4.5,2.8) node {$\scriptscriptstyle{7}$};
    
    \draw (15,0) node[anchor=east]  {$\tilde{E}_{7}$};
    \foreach \x in {18,20,22,24,26}
    \draw[thick,fill=white!70] (\x cm,0) circle (.3cm);
    \draw[thick,fill=white!70] (22 cm, 2 cm) circle (.3cm);
    \draw[thick,fill=black!70] (16 cm,0) circle (.3cm);
    \draw[thick,fill=black!70] (28 cm,0) circle (.3cm);
    \foreach \y in {16.3, 18.3, 20.3, 22.3,24.3,26.3}
    \draw[thick] (\y cm,0) -- +(1.4 cm,0);
    \draw[thick] (22 cm,.3 cm) -- +(0,1.4 cm);
    \draw (16,.8) node {$\scriptscriptstyle{0}$};
    \draw (18,.8) node {$\scriptscriptstyle{1}$};
    \draw (20,.8) node {$\scriptscriptstyle{2}$};
    \draw (22.5,.8) node {$\scriptscriptstyle{3}$};
    \draw (24,.8) node {$\scriptscriptstyle{4}$};
    \draw (26,.8) node {$\scriptscriptstyle{5}$};
    \draw (28,.8) node {$\scriptscriptstyle{6}$};
    \draw (22.5,2.8) node {$\scriptscriptstyle{7}$};
  \end{tikzpicture}

    \caption{Finite type Dynkin diagrams with cominuscule simple root marked in black (left
        column), and the corresponding affine Dynkin diagrams with both the cominuscule and
        the additional affine root marked in black (right column).}
    \label{TBL:comin}
\end{table}
        
 \begin{table}
  \begin{tikzpicture}[scale=.4]
    \draw (-1,0) node[anchor=east]  {$C_{n}$};
    \foreach \x in {2,4,6,8}
    \draw[thick,fill=white!70] (\x cm,0) circle (.3cm);
    \draw[thick,fill=black!70] (0 cm,0) circle (.3cm);
    \draw[thick] (0.3 cm,0) -- +(1.4 cm,0);
    \draw[dotted,thick] (2.3 cm,0) -- +(1.4 cm,0);
    \draw[thick] (4.3 cm,0) -- +(1.4 cm,0);
    \draw[thick] (6.3 cm, .1 cm) -- +(1.4 cm,0);
    \draw[thick] (6.3 cm, -.1 cm) -- +(1.4 cm,0);
    \draw[thick] (6.9 cm, 0 cm) -- +(.3 cm, .3 cm);
    \draw[thick] (6.9 cm, 0 cm) -- +(.3 cm, -.3 cm);
    \draw (0,.8) node {$\scriptscriptstyle{1}$};
    \draw (2,.8) node {$\scriptscriptstyle{2}$};
    \draw (4,.8) node {$\scriptscriptstyle{n-2}$};
    \draw (6,.8) node {$\scriptscriptstyle{n-1}$};
    \draw (8,.8) node {$\scriptscriptstyle{n}$};
    
    \draw (15,0) node[anchor=east]  {${A}^{(2)}_{2n-1}$};
    \foreach \x in {18,20,22,24}
    \draw[thick,fill=white!70] (\x cm,0) circle (.3cm);
    \draw[xshift=15 cm,thick,fill=black!70] (30: 17 mm) circle (.3cm);
    \draw[xshift=15 cm,thick,fill=black!70] (-30: 17 mm) circle (.3cm);
    \draw[dotted,thick] (18.3 cm,0) -- +(1.4 cm,0);
    \foreach \y in {20.3}
    \draw[thick] (\y cm,0) -- +(1.4 cm,0);
    \draw[thick] (22.3 cm, .1 cm) -- +(1.4 cm,0);
    \draw[thick] (22.3 cm, -.1 cm) -- +(1.4 cm,0);
    \draw[thick] (22.9 cm, 0 cm) -- +(.3 cm, .3 cm);
    \draw[thick] (22.9 cm, 0 cm) -- +(.3 cm, -.3 cm);
    \draw[xshift=17.5 cm,thick] (30: 3 mm) -- (136: 10.5 mm);
    \draw[xshift=17.5 cm,thick] (-30: 3 mm) -- (-136: 10.5 mm);
    \draw (16.5,1.5) node {$\scriptscriptstyle{1}$};
    \draw (16.5,-.2) node {$\scriptscriptstyle{0}$};
    \draw (18,.8) node {$\scriptscriptstyle{2}$};
    \draw (20,.8) node {$\scriptscriptstyle{n-2}$};
    \draw (22,.8) node {$\scriptscriptstyle{n-1}$};
    \draw (24,.8) node {$\scriptscriptstyle{n}$};
  \end{tikzpicture}
  
    \vspace{3mm}

  \begin{tikzpicture}[scale=.4]
    \draw (-1,0) node[anchor=east]  {$B_{n}$};
    \foreach \x in {0,2,4,6}
    \draw[thick,fill=white!70] (\x cm,0) circle (.3cm);
    \draw[thick,fill=black!70] (8 cm,0) circle (.3cm);
    \draw[thick] (0.3 cm,0) -- +(1.4 cm,0);
    \draw[dotted,thick] (2.3 cm,0) -- +(1.4 cm,0);
    \draw[thick] (4.3 cm,0) -- +(1.4 cm,0);
    \draw[thick] (6.3 cm, .1 cm) -- +(1.4 cm,0);
    \draw[thick] (6.3 cm, -.1 cm) -- +(1.4 cm,0);
    \draw[thick] (6.9 cm, .3 cm) -- +(.3 cm, -.3 cm);
    \draw[thick] (6.9 cm, -.3 cm) -- +(.3 cm, .3 cm);
    \draw (0,.8) node {$\scriptscriptstyle{1}$};
    \draw (2,.8) node {$\scriptscriptstyle{2}$};
    \draw (4,.8) node {$\scriptscriptstyle{n-2}$};
    \draw (6,.8) node {$\scriptscriptstyle{n-1}$};
    \draw (8,.8) node {$\scriptscriptstyle{n}$};
    
    \draw (15,0) node[anchor=east]  {${D}^{(2)}_{n+1}$};
    \foreach \x in {16,18,20,22,24,26}
    \draw[thick,fill=white!70] (\x cm,0) circle (.3cm);
    \foreach \x in {16,26}
    \draw[thick,fill=black!70] (\x cm,0) circle (.3cm);
    \draw[dotted,thick] (20.3 cm,0) -- +(1.4 cm,0);
    \foreach \y in {18.3,22.3}
    \draw[thick] (\y cm,0) -- +(1.4 cm,0);
    \draw[thick] (24.3 cm, .1 cm) -- +(1.4 cm,0);
    \draw[thick] (24.3 cm, -.1 cm) -- +(1.4 cm,0);
    \draw[thick] (24.9 cm, .3 cm) -- +(.3 cm, -.3 cm);
    \draw[thick] (24.9 cm, -.3 cm) -- +(.3 cm, .3 cm);
    \draw[thick] (16.3 cm, .1 cm) -- +(1.4 cm,0);
    \draw[thick] (16.3 cm, -.1 cm) -- +(1.4 cm,0);
    \draw[thick] (17.2 cm, .3 cm) -- +(-.3 cm, -.3 cm);
    \draw[thick] (17.2 cm, -.3 cm) -- +(-.3 cm, .3 cm);
    \draw (16,.8) node {$\scriptscriptstyle{0}$};
    \draw (18,.8) node {$\scriptscriptstyle{1}$};
    \draw (20,.8) node {$\scriptscriptstyle{2}$};
    \draw (22,.8) node {$\scriptscriptstyle{n-2}$};
    \draw (24,.8) node {$\scriptscriptstyle{n-1}$};
    \draw (26,.8) node {$\scriptscriptstyle{n}$};
  \end{tikzpicture}
  \caption{Finite type Dynkin diagrams with minuscule simple root marked in black (left
        column), and the corresponding twisted affine Dynkin diagrams with both the minuscule and
        the additional affine root marked in black (right column).  We use Kac's notation for the twisted
        affine Dynkin diagrams.  Note that in Dynkin's notation, ${A}^{(2)}_{2n-1}$ is denoted
        $\tilde{B}^t_n$, and ${D}^{(2)}_{n+1}$ is denoted $\tilde{C}^{t}_n$.}
    \label{TBL:min}
\end{table}

\end{document}